\newtheorem{theorem}{Theorem}[section]
\newtheorem{pro}[theorem]{Proposition}
\newtheorem{conj}[theorem]{Conjecture}
\newtheorem{lem}[theorem]{Lemma}
\newtheorem{coro}[theorem]{Corollary}
\theoremstyle{definition}
\newtheorem{exam}[theorem]{Exapmle}
\def\00{\mathbf{0}}
\def\CS{{\mathrm{CS}}}
\begin{document}

\title{On subgroup perfect codes in Cayley sum graphs}
\author{Jun-Yang Zhang}
\affil{School of Mathematical Sciences, Chongqing Normal University\\ Chongqing 401331, People's Republic of China}
\date{}

\openup 0.5\jot
\maketitle

 \footnotetext{E-mail address: jyzhang@cqnu.edu.cn (Jun-Yang Zhang)}
\begin{abstract}
A perfect code $C$ in a graph $\Gamma$ is an independent set of vertices of $\Gamma$ such that every vertex outside of $C$ is adjacent to a unique vertex in $C$, and a total perfect code $C$ in $\Gamma$ is a set of vertices of  $\Gamma$ such that every vertex of $\Gamma$ is adjacent to a unique vertex in $C$. Let $G$ be a finite group and $X$ a normal subset of $G$. The Cayley sum graph $\CS(G,X)$ of $G$ with the connection set $X$ is the graph with vertex set $G$ and two vertices $g$ and $h$ being adjacent if and only if $gh\in X$ and $g\neq h$.
In this paper, we give some necessary conditions of a subgroup of a given group being a (total) perfect code in a Cayley sum graph of the group. As applications, the Cayley sum graphs of some families of groups which admit a subgroup as a (total) perfect code are classified.

\medskip
{\em Keywords:} Cayley sum graph; perfect code; total perfect code

\medskip
{\em AMS subject classifications (2020):} 05C25, 05C69, 94B99
\end{abstract}
\section{Introduction}
\label{sec:int}

Throughout the paper, all groups are finite with identity element denoted by $1$, and all graphs are finite, undirected and simple. Cayley sum graph, which is also called addition graph \cite{CGW2003}, addition Cayley graph \cite{GLS2009,L2010,SGS2011} and sum graph \cite{C1989}, is a variation of the well-studied Cayley graph. The concept of Cayley sum graphs was at first only for abelian groups and then generalized over arbitrary groups in \cite{AT2016}. Let $G$ be a group and $X$ a normal subset of $G$ (that is, $g^{-1}Xg=X$ for all $g\in G$). The \emph{Cayley sum graph} $\CS(G,X)$ of $G$ with \emph{connection set} $X$ is the graph with vertex set $G$ and two vertices $g$ and $h$ being adjacent if and only if $gh\in X$ and $g\neq h$. Note that this definition differs from that in \cite{AT2016} where loops in graphs are allowed and so $x\neq y$ is not required. An element $a$ of $G$ is called a \emph{square} if $a=g^2$ for some $g\in G$ and a \emph{nonsquare} if otherwise. A subset of $G$ is said to be \emph{square-free} if it contains no square of $G$. It is obvious that the neighbourhood of a vertex $g$ is $Xg^{-1}$ if $g^{2}\notin X$ and $(X\setminus\{g^2\})g^{-1}$ if $g^{2}\in X$. Therefore $\CS(G,X)$ is a regular graph if and only if $X$ is square-free in $G$.

\smallskip
Let $\Gamma$ be a graph with vertex set $V(\Gamma)$ and edge set $E(\Gamma)$. Let $C$ be a subset of $V(\Gamma)$. If $C$ is an independent set of $\Gamma$ and every vertex in $V(\Gamma) \setminus C$ has exactly one neighbor in $C$, then $C$ is called a \emph{perfect code} in $\Gamma$. If every vertex of $\Gamma$ has exactly one neighbor in $C$, then $C$ is called a \emph{total perfect code} in $\Gamma$. It is obvious that a total perfect code in $\Gamma$ induces a matching in $\Gamma$ and therefore has even cardinality.
In graph theory, a perfect code in a graph is also called an \emph{efficient dominating set} \cite{DeS} or \emph{independent perfect dominating set }\cite{Le}, and a total perfect code is called an \emph{efficient open dominating set} \cite{HHS}.

\smallskip
The concept of perfect codes in graphs were firstly introduced by Biggs \cite{Big} as a generalization of the classical notions of perfect Hamming- and Lee-error-correcting codes. Perfect codes in Cayley graphs have received considerable attention see \cite[Section 1]{HXZ18} for a brief survey and \cite{CWX2020,DSLW16, FHZ, MWWZ19, Ta13, Zhou2016, Z15,ZZ2021} for a few recent papers. In particular, perfect codes in Cayley graphs which are subgroups of the underlying groups are especially interesting since they are generalizations of perfect linear codes \cite{Va75} in the classical setting.

\smallskip
Let $\CS(G,X)$ be a Cayley sum graph of $G$ with the connection set $X$. We use $\mathcal{P}(G,X)$ ($\mathcal{T}(G,X)$) to denote the collection of all subsets of $G$ which are perfect codes (total perfect codes) in $\CS(G,X)$. A subgroup $H$ of $G$ is called a \emph{subgroup perfect code} (\emph{subgroup total perfect code}) in $\CS(G,X)$ if $H\in \mathcal{P}(G,X)$ ($H\in \mathcal{T}(G,X)$). Note that $\{1\}$ is a perfect code of $\CS(G,G\setminus\{1\})$. A subgroup perfect code $H$ of $\CS(G,X)$ is said to be \emph{nontrivial} if $H\neq \{1\}$. Very recently, subgroup perfect codes in regular Cayley sum graphs of abelian groups were studied in \cite{MFW2020, MWY2022}. In this paper, we study subgroup (total) perfect codes in Cayley sum graphs of groups where the graphs are not necessary regular and groups are not necessary abelian. The results in this paper include a necessary and sufficient condition and a few necessary conditions of a subgroup of a given group being a (total) perfect code in a Cayley sum graph of the group. These results are used to study (total) perfect codes in a Cayley sum graphs of abelian groups, dihedral groups and one-dimensional  affine groups.

\smallskip
Now we describe the results of this paper in detail as follows.
In the next section, we prove a proposition which characterizes the relationship between normal subgroup perfect codes and normal subgroup total perfect codes in Cayley sum graphs.
In Section \ref{sec:pc}, we focus on the study of subgroup perfect codes in Cayley sum graphs. After showing an easy necessary and sufficient condition, we mainly prove a few necessary conditions of a subgroup $H$ of a given group $G$ being a perfect code in a Cayley sum graph of $G$. Using those necessary conditions, we prove that if the core of $H$ in $G$ is not contained in the center of $G$, then there is no connected Cayley sum graph of $G$ admitting $H$ as a perfect code.
In Section \ref{sec:tpc}, we develop the theory for perfect codes obtained in Section \ref{sec:pc} in parallel to that for total perfect codes.
In Section \ref{sec:groups}, we study (total) perfect codes in Cayley sum graphs of some spacial families of groups, including classifications of (total) perfect codes in connected Cayley sum graphs of abelian groups, dihedral groups and  one-dimensional affine groups respectively. In particular, we give a simple proof of a main result in \cite{MWY2022} which characterizes subgroup perfect codes in regular Cayley sum graphs of abelian groups.
\section{Preliminaries}
\label{sec:prel}

In \cite{AT2016}, a necessary and sufficient condition of a Cayley sum graph being connected was given. Note that this result also holds for Cayley sum graphs in the present paper as these graphs are exactly  obtained from the Cayley sum graphs in \cite{AT2016} by removing all loops.
This result is stated as follows.
\begin{lem}[\cite{AT2016}]
\label{connected}
Let $X$ be a normal subset of a group $G$. Then the Cayley sum graph $\CS(G,X)$ is connected if and only if $G=\langle X\rangle$ and
$|G:\langle X^{-1}X\rangle|\leq2$.
\end{lem}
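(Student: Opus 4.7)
The plan is to compute the connected component of the identity in $\CS(G,X)$ and then identify when that component equals the entire vertex set. Before starting, I would observe that loops (at vertices $g$ with $g^2 \in X$) do not influence reachability between distinct vertices, so the loopless graph of this paper and the graph-with-loops variant of \cite{AT2016} have the same connected components; hence it suffices to analyze the loopless graph directly, without any need to re-derive the result as a separate theorem.

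Set $N := \langle X^{-1}X\rangle$. First I would verify that $N$ is normal in $G$: a typical generator $x^{-1}y$ with $x,y \in X$ conjugates by $g \in G$ to $(g^{-1}xg)^{-1}(g^{-1}yg)$, which again lies in $X^{-1}X$ by normality of $X$. Next, using the recurrence $v_{i+1} = v_i^{-1}x_{i+1}$ with $x_{i+1} \in X$ together with the normality of $X^{-1}$ and of $N$, I would prove by induction on the walk length $k$ that any walk $1 = v_0, v_1, \ldots, v_k$ in $\CS(G,X)$ satisfies $v_k \in N$ when $k$ is even and $v_k \in XN$ when $k$ is odd. For the reverse inclusions, a single element $x \in X$ is reached from $1$ in one step, any $x^{-1}y \in X^{-1}X$ is reached in two steps via the intermediate vertex $x$, and by concatenating and translating such short pieces (invoking normality of $X$ to shuffle conjugates into the correct cosets) every element of $N$ is reachable in an even number of steps and every element of $XN$ in an odd number of steps. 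Thus the component of $1$ is exactly $N \cup XN$.

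The crucial structural observation is that all of $X$ lies in a single $N$-coset, because $x^{-1}x' \in X^{-1}X \subseteq N$ for every $x,x' \in X$. Consequently $XN$ equals $N$ if $X \subseteq N$ and is a single nontrivial coset of $N$ otherwise, so $N \cup XN$ covers at most two cosets of $N$. The graph is connected iff $N \cup XN = G$, which splits into the cases $N = G$ (giving $|G:N|=1$) or $|G:N|=2$ with $X$ meeting the nontrivial coset. Since $N \subseteq \langle X\rangle$ automatically, the condition $\langle X\rangle = G$ exactly encodes that $X$ surjects onto $G/N$; combined with $|G:N| \le 2$ this becomes equivalent to connectivity.

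The main technical obstacle I foresee is the converse direction in the walk computation: one must verify that \emph{every} element of $N$ (not merely each individual generator $x^{-1}y$) is actually reached by some even walk from $1$, which requires repeated use of normality to absorb conjugates that appear when short walk-pieces are spliced together. A small corner case also needs attention, namely $X = \emptyset$, where the graph is edgeless and connectedness forces $|G|=1$; this is automatically accommodated since both conditions of the lemma then become either vacuous or trivial.
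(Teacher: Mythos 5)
Your proposal is correct, but it does something genuinely different from the paper: the paper offers no proof of Lemma~\ref{connected} at all, simply citing \cite{AT2016} and remarking that deleting loops does not change connectivity, whereas you give a self-contained argument. Your opening observation is exactly the paper's justification for transferring the result from the loop-allowing setting, and the rest is (presumably) the argument of \cite{AT2016} reconstructed: the component of $1$ is $N\cup XN$ with $N=\langle X^{-1}X\rangle$ normal, $X$ lies in a single coset of $N$, and connectivity amounts to $N\cup XN=G$. The one step you rightly flag as delicate --- that every element of $N$ is reached by splicing two-step pieces --- does go through: if a walk of even length ends at $n\in N$ and you wish to append the generator $x_2^{-1}y_2$, take the two further steps $n\mapsto n^{-1}x'$ with $x'=n x_2 n^{-1}\in X$ (normality of $X$) and then $n^{-1}x'\mapsto (n^{-1}x')^{-1}y_2=x'^{-1}n y_2=n x_2^{-1}y_2$, which lands exactly at $n(x_2^{-1}y_2)$; the odd cosets are handled by one further step using $XN=NX$. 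Only one phrase should be tightened: ``$\langle X\rangle=G$ exactly encodes that $X$ surjects onto $G/N$'' is not literally right, since the image of $X$ in $G/N$ is a single element; what is true, and what your case split actually uses, is that when $|G:N|=2$ the condition $\langle X\rangle=G$ is equivalent to $X\not\subseteq N$, i.e.\ to $XN$ being the nontrivial coset. What your route buys is independence from \cite{AT2016} and an explicit description of the components (the cosets of $N$ inside $\langle X\rangle$-translates), at the cost of the walk-parity bookkeeping; what the paper's route buys is brevity.
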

Recall that the center $Z(G)$ of a group $G$ is a subgroup of $G$ consisting of all elements $x\in G$ such that $xg=gx$ for all $g\in G$. The following lemma characterizes the relationship between normal subgroup perfect codes and normal subgroup total perfect codes in Cayley sum graphs.
\begin{pro}
\label{relation1}
Let $G$ be a group and $H$ a normal subgroup of $G$. Then $H$ is a total perfect code of some Cayley sum graph of $G$ if and only if $H$ is perfect code of some Cayley sum graph of $G$ and $Z(G)$ contains a nonsquare of $H$.
\end{pro}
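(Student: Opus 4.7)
My plan is to prove the two directions of the biconditional independently.

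For sufficiency $(\Leftarrow)$, I would suppose $H$ is a perfect code in $\CS(G, Y)$ for some normal subset $Y$ and fix an element $z \in Z(G)\cap H$ which is a nonsquare of $H$. The natural construction is to enlarge $Y$ by this single central element: set $X := Y \cup \{z\}$, which is a normal subset of $G$ because $\{z\}$ is normal (by centrality of $z$). A preliminary observation is that $z \notin Y$: otherwise, for any $g \in H$ the element $g^{-1}z \in H$ would be distinct from $g$ (by the nonsquare condition $z \neq g^2$) and satisfy $g(g^{-1}z) = z \in Y$, contradicting independence of $H$ in $\CS(G, Y)$. The verification that $H$ is a total perfect code in $\CS(G, X)$ is then a count in two cases. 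For $g \in G \setminus H$ the unique $H$-neighbour inherited from $\CS(G, Y)$ is preserved, and the candidate $g^{-1}z$ coming from the label $z$ fails to lie in $H$ (since $g \notin H$ and $z \in H$). For $g \in H$ the $Y$-contribution vanishes by independence, while $g^{-1}z \in H$ is distinct from $g$ (again by the nonsquare condition) and supplies the unique label-$z$ neighbour.

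For necessity $(\Rightarrow)$, I would assume $H$ is a total perfect code in $\CS(G, X)$ and handle the two halves of the conjunction separately. To manufacture a perfect code I would take $Y := X \setminus H$, which is normal because both $X$ and $H$ are. Independence of $H$ in $\CS(G, Y)$ is automatic since $Y \cap H = \emptyset$; and for $g \in G \setminus H$ the neighbour counts in $H$ are unchanged between $X$ and $Y$, because for $h \in H$ the product $gh$ lies in $gH \neq H$, so $gh \in X \iff gh \in Y$. Hence $H$ is a perfect code in $\CS(G, Y)$.

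To exhibit the central nonsquare, I would consider the unique $H$-neighbour $z$ of $1$ in $\CS(G, X)$, so $z \in H$, $z \neq 1$, and $z \in X$. Since $X$ is normal, conjugation by any $a \in G$ is a graph automorphism of $\CS(G, X)$ which fixes $H$ setwise (by normality of $H$) and therefore preserves the induced perfect matching on $H$; conjugation fixes $1$, so its unique partner $z$ must be fixed as well, giving $aza^{-1} = z$ for every $a \in G$ and hence $z \in Z(G)$. The remaining step is to show $z$ is a nonsquare of $H$, and this is the main obstacle. I would argue by contradiction: if $z = h_0^2$ for some $h_0$, then the natural involution $\sigma \colon H \to H$, $g \mapsto g^{-1}z$, would have $h_0$ as a fixed point, while off its fixed-point set $\sigma$ must coincide with the matching by uniqueness of partners (since $g \cdot g^{-1}z = z \in X$ and $g \neq g^{-1}z$ there). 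The square roots of $z$ in $H$ would then be forced to be paired among themselves by a $G$-equivariant fixed-point-free matching, and I would derive a contradiction by combining this equivariance with the single-neighbour constraint at the vertices $g \in G$ satisfying $g^2 = z$.
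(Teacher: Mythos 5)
Your sufficiency direction is correct and is essentially the paper's own construction (enlarge the connection set by the single central nonsquare $z$; the nonsquare hypothesis guarantees $g^{-1}z\neq g$ for $g\in H$, so each vertex of $H$ acquires exactly one partner). In the necessity direction your first two steps are also sound, and in places more careful than the paper's: you delete all of $X\cap H$ rather than a single element to manufacture the perfect code, and you obtain $z\in Z(G)$ from the fact that conjugation is a graph automorphism fixing $1$ and preserving the matching induced on $H$, whereas the paper deduces it from the claim $H\cap Y=\{z\}$.

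The genuine gap is the last step: you explicitly leave unproved that $z$ is a nonsquare of $H$, and the route you sketch does not close. The missing idea --- and the way the paper argues --- is to first determine $H\cap X$. The neighbours of $1$ lying in $H$ are exactly the elements of $(H\cap X)\setminus\{1\}$, so uniqueness of the neighbour of $1$ forces $H\cap X=\{z\}$ (provided $1\notin X$). With that in hand the nonsquare claim is immediate: if $z=h_0^2$ with $h_0\in H$, the only candidate $H$-neighbour of $h_0$ is $h_0^{-1}z=h_0$ itself, which is not an edge since loops are excluded, so $h_0$ would have no neighbour in $H$, contradicting the total perfect code property. Your involution $\sigma(g)=g^{-1}z$ coincides with the matching on $H$ only once one knows that $z$ is the \emph{only} element of $X$ producing edges inside $H$; without that, partners can be supplied by other elements of $H\cap X$ and no contradiction is available. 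Concretely, take $G=H=Q_8$ and $X=\{1,-1\}$: then $\CS(G,X)$ is a perfect matching, so $H\in\mathcal{T}(G,X)$, the partner of $1$ is $z=-1=i^2$, a square of $H$, and the six square roots of $z$ are matched among themselves via the element $1\in X$ exactly as your sketch fears, so the contradiction you hope for does not arise. (This example also shows that the case $1\in X$ has to be excluded or treated separately before the nonsquare step can succeed; the paper's own proof passes over that case silently when it asserts $H\cap Y=\{z\}$.)
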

\begin{proof}
Let $H\in\mathcal{T}(G,Y)$ for some normal subset $Y$  of $G$. Then every $g\in G$ is adjacent to a unique vertex $h\in H$ in $\CS(G,Y)$. Let $z$ be the unique element in $H$ adjacent to $1$. Then $H\cap Y=\{z\}$ and $z$ is a nonsquare of $H$. Since both $H$ and $Y$ are normal in $G$, we have $z^G\subseteq H\cap Y$ and therefore $z^g=z$ for all $g\in G$. Thus $z\in Z(G)$.
Set $X=Y\setminus \{z\}$. Since $Y$ is normal in $G$ and $z\in Z(G)$, we have that $X$ is normal in $G$. Note that $H\cap X=\emptyset$. Therefore $H$ is an independent set of $\CS(G,X)$. Let $a$ be an arbitrary element in $G\setminus H$ and $h$ the unique vertex in $H$ adjacent to $a$ in $\CS(G,Y)$. Then $h$ is the unique element in $H$ satisfying $ab\in X$. In other words, $h$ is the unique vertex in $H$ adjacent to $a$ in $\CS(G,X)$. It follows that $H\in\mathcal{P}(G,X)$.

Let $H\in\mathcal{P}(G,X)$ for some normal subset $X$ of $G$, $z\in Z(G)\cap H$ and $z$ be a nonsquare of $H$. Set $Y=X\cup\{z\}$. Then $Y$ is normal in $G$ and it straightforward to check that $H\in\mathcal{T}(G,X)$.
\end{proof}
\section{Perfect codes}
\label{sec:pc}

In this section, we at first confirm a few simple facts on perfect codes in Cayley sum graphs and then prove two theorems which characterize those codes more deeply.

\smallskip
The first lemma gives a necessary and  sufficient condition of a subgroup of a given group being a perfect code in a Cayley sum graph of the group.
\begin{lem}
\label{iff}
Let $G$ be a group and $H$ a subgroup of $G$. Then $H$ is a perfect code of some Cayley sum graph of $G$ if and only if $G$ has a normal subset $X$ such that $X\cup\{1\}$ is a left transversal of $H$ in $G$.
\end{lem}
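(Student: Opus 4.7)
The plan is to reformulate both the independence and the domination conditions for $H$ in $\CS(G,X)$ as purely coset-theoretic statements about how $X$ meets the left cosets of $H$, after which the equivalence becomes transparent. The key reductions are: (i) via the bijection $h\mapsto gh$ from $H$ onto $gH$, the requirement that $g\in G\setminus H$ have a unique neighbour in $H$ reads $|gH\cap X|=1$; and (ii) independence of $H$ reads $h_{1}h_{2}\notin X$ for all distinct $h_{1},h_{2}\in H$, and for $|H|\geq 2$ this simplifies to $X\cap H=\emptyset$.

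For the ($\Leftarrow$) direction, I would assume that $X$ is a normal subset with $X\cup\{1\}$ a left transversal of $H$ in $G$. Then $X\cap H=\emptyset$ because $1$ already represents the coset $H$, so for distinct $h_{1},h_{2}\in H$ the product $h_{1}h_{2}\in H$ lies outside $X$, giving independence. For each $g\in G\setminus H$ the coset $gH$ contains exactly one representative from $X\cup\{1\}$, which cannot be $1$ since $1\in H$, hence $|gH\cap X|=1$, and by (i) this says $g$ has a unique neighbour in $H$.

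For the ($\Rightarrow$) direction, suppose $H\in\mathcal{P}(G,X)$ for some normal subset $X$. I would separate two cases according to $|H|$. If $H=\{1\}$, then every $g\neq 1$ must be adjacent to $1$, which forces $X=G\setminus\{1\}$ and $X\cup\{1\}=G$ is trivially a left transversal. If $|H|\geq 2$, then for any $h\in H$ we can pick $h_{1}\in H$ with $h_{1}^{2}\neq h$ (at most one element of $H$ can square to $h$, and $|H|\geq 2$), so $h=h_{1}(h_{1}^{-1}h)$ is a product of two distinct elements of $H$; independence of $H$ therefore forces $X\cap H=\emptyset$. Combining this with the equality $|gH\cap X|=1$ for every $g\in G\setminus H$ (from (i)), we conclude that $X$ meets each coset other than $H$ in exactly one element and is disjoint from $H$, so $X\cup\{1\}$ is a left transversal of $H$ in $G$.

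The only minor obstacle is the reduction of the independence condition to $X\cap H=\emptyset$, which requires the observation that every element of a subgroup of order at least two can be written as the product of two distinct elements of that subgroup; the corner case $|H|=1$ is then handled separately. Once these translations are in place, both directions follow immediately from the definition of a left transversal.
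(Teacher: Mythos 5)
Your overall strategy --- translating both the independence condition and the unique-domination condition into statements about how $X$ meets the left cosets of $H$ --- is exactly the paper's, and the coset-counting parts of both directions are fine. However, the step you yourself single out as the crux rests on a false claim. It is not true that every element of a subgroup of order at least two can be written as a product of two distinct elements of that subgroup, and the justification you offer (``at most one element of $H$ can square to $h$'') is also false: in $H=\{1,t\}$ with $t^{2}=1$ both elements square to $1$, and the only product of two distinct elements of $H$ is $1\cdot t=t$, so the identity is not such a product. The same failure occurs for the identity of any elementary abelian $2$-group. Consequently independence of $H$ does not force $X\cap H=\emptyset$; it only forces $X\cap H\subseteq\{1\}$, because for $h\in H$ with $h\neq 1$ the factorization $h=1\cdot h$ into two distinct elements of $H$ already shows $h\notin X$. (This also makes your case split unnecessary; and note that your $H=\{1\}$ case likewise only yields $X\supseteq G\setminus\{1\}$, not $X=G\setminus\{1\}$, since $1\in X$ is not excluded by the definitions.)

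The repair is immediate: the weaker conclusion $X\cap H\subseteq\{1\}$ is all that is needed, since whether or not $1\in X$ the set $X\cup\{1\}$ then meets the coset $H$ exactly in $\{1\}$ and meets every other coset $gH$ exactly once by your item (i), so it is still a left transversal. Thus the lemma survives, but the ``key observation'' as you state it is wrong and should be replaced by the one-line argument above. For completeness, the same $1\in X$ subtlety touches your ($\Leftarrow$) direction: if $1\in X$ and $H$ contains an element of order greater than $2$, then $H$ is not independent in $\CS(G,X)$ itself, and one must instead pass to the normal subset $X\setminus\{1\}$, for which $(X\setminus\{1\})\cup\{1\}=X\cup\{1\}$ is the same transversal; the paper's own proof is equally silent on this point and implicitly assumes $1\notin X$.
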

\begin{proof}
$\Rightarrow$) Let $H\in\mathcal{P}(G,X)$ for some normal subset $X$  of $G$. Then $H$ is an independent set of $\CS(G,X)$, and every $g\in G\setminus H$ is adjacent to a unique vertex $h\in H$. It follows that $H\cap X=\emptyset$ and every $g\in G\setminus H$ can be uniquely written as $g=xh^{-1}$ for some $x\in X$ and $h\in H$. Therefore $X\cup\{1\}$ is a left transversal of $H$ in $G$.

$\Leftarrow$) Let $X$ be a normal subset of $G$ such that $X\cup\{1\}$ is a left transversal of $H$ in $G$. Then $H\cap X=\emptyset$ and every $g\in G\setminus H$ can be uniquely written as $g=xh^{-1}$ for some $x\in X$ and $h\in H$. Therefore $H$ is an independent set of the Cayley sum graph $\CS(G,X)$ and every $g\in G\setminus H$ is adjacent to a unique vertex $h\in H$. Thus $H\in\mathcal{P}(G,X)$.
\end{proof}

Let $\sigma$ be an automorphism of $G$. We use $x^{\sigma}$ to denote the image of $x$ under $\sigma$ for all $x\in G$ and set $X^{\sigma}:=\{x^{\sigma}\mid x\in X\}$ for all subset $X$ of $G$. For each  $g\in G$, we write $x^g:=g^{-1}xg$ and $X^{g}:=\{x^{g}\mid x\in X\}$.
Since
\begin{equation*}
(X^{\sigma})^g=g^{-1}X^{\sigma}g=
\big((g^{\sigma^{-1}})^{-1}\big)^\sigma X^{\sigma}(g^{\sigma^{-1}})^\sigma
=\big((g^{\sigma^{-1}})^{-1}Xg^{\sigma^{-1}}\big)^\sigma=
(X^{g^{\sigma^{-1}}})^\sigma,
\end{equation*}
we conclude that $X$ is normal in $G$ if and only if $X^{\sigma}$ is normal in $G$. Obviously, if $X\cup\{1\}$ is a left transversal of $H$ in $G$, then $X^{\sigma}\cup\{1\}$ is a left transversal of $H^{\sigma}$ in $G$. Therefore Lemma \ref{iff} leads to the following result.
\begin{lem}
\label{auto}
Let $G$ be a group, $H$ a subgroup of $G$, $X$ a normal subset of $G$ and $\sigma$ an automorphism of $G$. Then $H\in\mathcal{P}(G,X)$ if and only if $H^{\sigma}\in\mathcal{P}(G,X^{\sigma})$. In particular, $H\in\mathcal{P}(G,X)$ if and only if $H^g\in \mathcal{P}(G,X)$ for a given $g\in G$.
\end{lem}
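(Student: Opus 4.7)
The plan is to reduce the statement directly to Lemma~\ref{iff}. By that lemma, $H\in\mathcal{P}(G,X)$ holds precisely when two conditions are met: $X$ is a normal subset of $G$, and $X\cup\{1\}$ is a left transversal of $H$ in $G$. I would check that each of these conditions transfers faithfully under $\sigma$ to the pair $(H^{\sigma},X^{\sigma})$.

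The normality half is already done: the displayed calculation just before the statement shows that $X$ is normal in $G$ if and only if $X^{\sigma}$ is normal in $G$. For the transversal half, I would use that $\sigma$ is a bijection of $G$ fixing $1$ and that it maps the left coset $xH$ bijectively onto $x^{\sigma}H^{\sigma}$, so the induced map on coset spaces $G/H\to G/H^{\sigma}$ sending $xH\mapsto x^{\sigma}H^{\sigma}$ is a bijection. Consequently $X\cup\{1\}$ meets each left coset of $H$ exactly once if and only if $X^{\sigma}\cup\{1\}$ meets each left coset of $H^{\sigma}$ exactly once. Combining both halves with Lemma~\ref{iff} yields the first equivalence.

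For the ``in particular'' clause I would specialise to the inner automorphism $\sigma_g\colon x\mapsto g^{-1}xg$. Then $H^{\sigma_g}=H^g$, and $X^{\sigma_g}=X^g=X$ because $X$ is assumed normal in $G$. Plugging $\sigma=\sigma_g$ into the already-proved first part gives $H\in\mathcal{P}(G,X)\iff H^g\in\mathcal{P}(G,X)$.

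There is no real obstacle here; the statement is a pure transport-of-structure fact, and the only thing worth being careful about is verifying that the bijection $\sigma$ does preserve both the normality of the connection set and the left-transversal property. Both verifications are immediate from $\sigma$ being a group automorphism, so the proof will be essentially a two-line reduction to Lemma~\ref{iff}, followed by the inner-automorphism specialisation.
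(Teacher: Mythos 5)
Your proposal is correct and follows essentially the same route as the paper: the paper's argument is exactly the reduction to Lemma~\ref{iff} via the displayed computation showing normality of $X$ transfers to $X^{\sigma}$, together with the observation that $\sigma$ carries the left-transversal property of $X\cup\{1\}$ for $H$ to that of $X^{\sigma}\cup\{1\}$ for $H^{\sigma}$. Your explicit specialisation to the inner automorphism $\sigma_g$, using $X^{g}=X$, is the intended (implicit) justification of the ``in particular'' clause.
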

Unlike perfect codes in Cayley graph, the subgroup $H$ of $G$ being a perfect code of a Cayley sum graph $\CS(G,X)$ can not guarantee that a coset $Hg$ is a perfect code of $\CS(G,X)$. Actually, $Hg$ is not necessary an independent set of $\CS(G,X)$ when $H$ is. However, we have the follow lemma.
\begin{lem}
\label{coset}
Let $G$ be a group, $H$ a subgroup of $G$, $X$ a normal subset of $G$ and $b$ an involution of $G$ such that $H^b=H$.
Then $H\in \mathcal{P}(G,X)$ if and only if $Hb\in \mathcal{P}(G,X)$.
\end{lem}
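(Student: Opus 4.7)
The plan is to verify both directions directly from the definition of perfect code, exploiting the identity
\begin{equation*}
(h_1 b)(h_2 b) \;=\; h_1 \cdot (b h_2 b) \;=\; h_1 h_2^b \;\in\; H,
\end{equation*}
valid because $b^2=1$ and $H^b=H$, together with the fact that $h \mapsto h^b$ is a bijection of $H$.

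First I would dispose of the \emph{domination} condition uniformly for both directions. Substituting $g' := gb$ and $h' := h^b$, the equality $g \cdot (hb) = g' h^b$ together with the equivalences $g \in Hb \Leftrightarrow g' \in H$ (using $b^2=1$) and $h \mapsto h^b$ bijective on $H$ shows that ``every $g \notin Hb$ has a unique neighbour in $Hb$'' is equivalent to ``every $g' \notin H$ has a unique neighbour in $H$.'' Thus the domination conditions for $H$ and for $Hb$ always coincide, so only the \emph{independence} condition requires separate treatment.

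For independence, a brief count shows that the set $\{h_1 h_2^b : h_1, h_2 \in H,\ h_1 \neq h_2\}$ equals $H$ unless the map $\phi: H \to H$ defined by $\phi(h) = h h^b$ is constant; in the latter case $\phi \equiv \phi(1) = 1$, the set equals $H \setminus \{1\}$, and $\phi \equiv 1$ is equivalent to saying that $b$ inverts $H$. In the forward direction Lemma~\ref{iff} gives $H \cap X = \emptyset$, and $Hb$-independence is immediate. In the backward direction $Hb$-independence yields $H \cap X \subseteq \{1\}$; if $H \cap X = \emptyset$ then $H$-independence again follows at once since $h_1 h_2 \in H$.

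The hard part will be the remaining sub-case $H \cap X = \{1\}$ with $b$ inverting $H$ (so $H$ is abelian). To handle it I would pass to the subgroup $K := H \langle b \rangle = H \rtimes \langle b \rangle$ and use that $X$, being $G$-normal, is $K$-normal, so $X \cap Hb$ is a union of $K$-conjugacy classes contained in $Hb$. A direct computation inside $K$ (using $h_0 (hb) h_0^{-1} = h h_0^2 b$ and $b (hb) b^{-1} = h^{-1} b$) shows that every such $K$-class has size $|H^2|$, where $H^2 := \{h^2 : h \in H\}$. Combined with the equality $|X \cap Hb| = 1$ already forced by the domination condition (applied to $g = 1$, assuming the non-trivial case $b \notin H$), this yields $|H^2| = 1$, so $H$ is an elementary abelian $2$-group; then $h_1 h_2 \neq 1$ for distinct $h_1, h_2 \in H$, whence $h_1 h_2 \notin X \cap H = \{1\}$ and $H$-independence holds.
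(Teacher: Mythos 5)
Your reduction of the domination condition to a substitution $g\mapsto gb$, $h\mapsto h^{b}$ is correct, and your backward direction is not only correct but more careful than the paper's own proof, which simply asserts $H\cap X=(Hb)(Hb)\cap X=\emptyset$ and thereby overlooks the possibility $1\in X$ (independence of $Hb$ controls only products of \emph{distinct} vertices, not the diagonal products $hh^{b}$); your $K$-conjugacy-class count at the vertex $1$ is exactly what is needed to close that case. The genuine gap is in your forward direction. The step ``Lemma \ref{iff} gives $H\cap X=\emptyset$'' is not available: Lemma \ref{iff} only yields that $X\cup\{1\}$ is a left transversal of $H$ in $G$, hence $H\cap X\subseteq\{1\}$, and $1\in X$ can genuinely occur for a subgroup perfect code. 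For example, in $G=\mathbb{Z}_2\times\mathbb{Z}_2$ with $H=\{(0,0),(1,0)\}$ and $X=\{(0,0),(0,1)\}$ one checks that $H\in\mathcal{P}(G,X)$ although $1\in H\cap X$; independence of $H$ forbids $1\in X$ only when $H$ has an element of order greater than $2$.

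Consequently ``$Hb$-independence is immediate'' fails in precisely the mirror image of your hard case: if $1\in X$ then, by independence of $H$, the subgroup $H$ is elementary abelian of exponent $2$, and if $b$ normalizes but does not centralize $H$, then $hb$ and $h^{b}b$ are distinct vertices of $Hb$ whose product is $h\,h^{b^{2}}=h^{2}=1\in X$, so independence of $Hb$ is in doubt. You must rule this configuration out exactly as you did in the other direction, now using domination of $H$ at the vertex $b$ (in the nontrivial case $b\notin H$): it gives $|Hb\cap X|=1$, while $X\cap Hb$ is a union of conjugacy classes of $K=H\langle b\rangle$ contained in $Hb$, each of size $|\{hh^{b}:h\in H\}|$ (here $h_0^{-1}(hb)h_0=h\,h_0h_0^{b}\,b$ since $H$ is abelian of exponent $2$); hence $hh^{b}=1$ for all $h\in H$, i.e.\ $b$ centralizes $H$, and then all products of distinct vertices of $Hb$ lie in $H\setminus\{1\}$, which is disjoint from $X$. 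With this mirrored argument inserted, your proof is complete, and it in fact repairs an edge case that the paper's own proof of this lemma (and the claim ``$H\cap X=\emptyset$'' inside the proof of Lemma \ref{iff}) passes over silently.
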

\begin{proof}
$\Rightarrow$) Let $H\in\mathcal{P}(G,X)$. By Lemma \ref{iff}, $X\cup\{1\}$ is a left transversal of $H$ in $G$. Since $b$ is an involution and $H^b=H$, we get $(Hb)(Hb)=HH^b=H$. Therefore $Hb$ is an independent set of $\CS(G,X)$ as $(Hb)(Hb)\cap X=H\cap X=\emptyset$. Now let $g$ be an arbitrary element in $G\setminus Hb$. Then $gb\notin H$. Since $X\cup\{1\}$ is a left transversal of $H$ in $G$, we get $|gbH\cap X|=1$. Therefore there exists a unique $h\in H$ such that $gbh\in X$, that is, $bh$ is the unique vertex in $Hb$ ($=bH$) adjacent to $g$ in $\CS(G,X)$. It follows that $Hb\in\mathcal{P}(G,X)$.

\smallskip
$\Leftarrow$) Let $Hb\in\mathcal{P}(G,X)$. Then $Hb$ is an independent set of $\CS(G,X)$ and every vertex in $G\setminus Hb$ is adjacent to a unique vertex in $H$. Since $b$ is an involution and $H^b=H$, we have $H\cap X=(Hb)(Hb)\cap X=\emptyset$. Therefore $H$ is an independent set of $\CS(G,X)$. Let $g$ be an arbitrary element in $G\setminus H$. Then $gb\notin Hb$. Let $hb$ be the unique vertex in $Hb$ adjacent to $gb$. Then $hb$ is the unique vertex in $Hb$ satisfying $gbhb\in X$. Set $h_1=h^b$. Then $h_1$ is a unique vertex in $H$ satisfying $gh_1\in X$. Therefore $H\in\mathcal{P}(G,X)$.
\end{proof}
We use $\dot\cup_{i=1}^{n}S_{i}$ to denote the union of the pair-wise disjoint sets $S_1,S_2,\ldots,S_n$. Let $G$ be a group and $H$ a subgroup of $G$. We use $|G:H|$ to denote the index of $H$ in $G$. The \emph{core} of $H$ in $G$ is the largest normal subgroup of $G$ contained in $H$. For each $g\in G$, we use $C_G(g)$ to denote the centralizer of $g$ in $G$. The following theorem gives a few necessary conditions of $H$ being a perfect code in a Cayley sum graph of $G$.
\begin{theorem}
\label{normal}
Let $G$ be a group, $X$ a normal subset of $G$, and $H$ a subgroup perfect code of the Cayley sum graph $\CS(G,X)$. Then the following statements hold.
\begin{enumerate}
  \item The core of $H$ in $G$ is contained in $C_{G}(x)$ for each $x\in X$.
  \item If $X=\dot\cup_{i=1}^{s}x_{i}^{G}$ and $H$ is normal in $G$, then $\frac{1}{|G:H|}+\sum_{i=1}^{s}\frac{1}{|C_{G}(x_i):H|}=1$.
  \item If $|X|>1$ and $H$ is normal in $G$, then $X$ is a union of at least two conjugacy classes of elements in $G$.
\end{enumerate}
\end{theorem}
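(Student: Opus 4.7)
The plan is to apply Lemma~\ref{iff} as the starting point: since $H\in\mathcal{P}(G,X)$, the set $X\cup\{1\}$ is a left transversal of $H$ in $G$, so each left coset $xH$ with $x\in X$ meets $X$ in the single element $x$, and $H$ itself is disjoint from $X$.

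For part (i), let $N$ denote the core of $H$ in $G$, fix $n\in N$ and $x\in X$, and aim to show $n^{-1}xn=x$. Normality of $X$ in $G$ gives $n^{-1}xn\in X$, so it suffices to prove that $n^{-1}xn$ and $x$ lie in the same left coset of $H$; equivalently, that $x^{-1}n^{-1}xn\in H$. This holds because $N$ is normal in $G$, so $x^{-1}n^{-1}x\in N\subseteq H$, and $n\in N\subseteq H$ as well, whence their product lies in $H$. The transversal property then forces $n^{-1}xn=x$, so $n\in C_G(x)$.

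For part (ii), assume $H$ is normal in $G$, so the core of $H$ equals $H$ itself and part (i) yields $H\subseteq C_G(x_i)$ for every $i$. Since $X\cup\{1\}$ is a left transversal and the conjugacy class $x_i^G$ has size $|G:C_G(x_i)|$,
\begin{equation*}
\sum_{i=1}^{s}|G:C_G(x_i)|=|X|=|G:H|-1.
\end{equation*}
Dividing by $|G:H|$ and using $\frac{|G:C_G(x_i)|}{|G:H|}=\frac{|H|}{|C_G(x_i)|}=\frac{1}{|C_G(x_i):H|}$ gives the claimed identity.

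For part (iii), suppose for contradiction that $X=x_1^G$ is a single conjugacy class. Part (ii) reduces to $\frac{1}{|G:H|}+\frac{1}{|C_G(x_1):H|}=1$. Since $x_1\in X$ implies $x_1\notin H$ while $x_1\in C_G(x_1)$, one has $|C_G(x_1):H|\geq 2$; and $|X|\geq 1$ forces $H\neq G$, so $|G:H|\geq 2$. If $|G:H|=2$, the equation yields $|C_G(x_1):H|=2$, hence $|C_G(x_1)|=|G|$ and $x_1\in Z(G)$, so $|X|=|x_1^G|=1$, contradicting $|X|>1$. If $|G:H|\geq 3$, the left side is at most $\frac{1}{3}+\frac{1}{2}<1$, again a contradiction. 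I expect part (i) to be the main obstacle, since the later parts rest on the containment $N\subseteq C_G(x)$ it provides; once that is in hand, (ii) reduces to a counting identity and (iii) to a brief case analysis.
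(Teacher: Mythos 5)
Your proposal is correct and follows essentially the same route as the paper: Lemma \ref{iff} supplies the transversal property, your argument for (i) is the paper's computation $a^{-1}xaH=xH$ phrased directly rather than by contradiction, and (ii) is the identical count. The only variation is in (iii), where you finish with an inequality/case analysis on $|G:H|$ and $|C_{G}(x_1):H|$ instead of the paper's observation that $|G:C_{G}(x_1)|$ divides $|G:H|=1+|G:C_{G}(x_1)|$ and hence equals $1$; both are valid one-line conclusions.
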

\begin{proof}
(i) By Lemma \ref{iff}, $X\cup \{1\}$ is a left transversal of $H$ in $G$. Let $N$ be the core of $H$ in $G$. Suppose to the contrary that $N$ is not contained in $C_{G}(x)$ for some $x\in X$. Then $a^{-1}xa\neq x$ for some $a\in N$. Since $X$ is normal in $G$, we have $a^{-1}xa\in X$. However, since $N$ is the core of $H$ in $G$, we have $a^{-1}xaH=x(x^{-1}a^{-1}x)H=xH$. This contradicts the fact that $X\cup \{1\}$ is a left transversal of $H$ in $G$.

(ii) Since $X\cup \{1\}$ is a left transversal of $H$ in $G$, we have $1+|X|=|G:H|$. Since $X=\dot\cup_{i=1}^{s}x_{i}^{G}$, we get $1+\sum_{i=1}^{s}|x_{i}^{G}|=|G:H|$. Noting that $|x_{i}^{G}|=|G:C_{G}(x_i)|$ for every $i\in\{1,\ldots,s\}$, we obtain, $1+\sum_{i=1}^{s}|G:C_{G}(x_i)|=|G:H|$. By the conclusion of (i), we have that $H$ is contained in $C_{G}(x_i)$ for every $i\in\{1,\ldots,s\}$ as $H$ is normal in $G$. Therefore $|G:C_{G}(x_i)|=\frac{|G:H|}{|C_{G}(x_i):H|}$ and it follows that  $\frac{1}{|G:H|}+\sum_{i=1}^{s}\frac{1}{|C_{G}(x_i):H|}=1$.

(iii) If otherwise that $X=x^{G}$ for some $x\in G$, then (ii) implies  $\frac{1}{|G:H|}+\frac{1}{|C_{G}(x):H|}=1$. Since $H$ is normal in $G$, by (i) we have that $H$ is contained in $C_{G}(x)$.
Therefore $|G:H|=|G:C_{G}(x)||C_{G}(x):H|$. It follows that
$(1+|G:C_{G}(x)|)=|G:H|$. Since $|G:C_{G}(x)|$ divides $|G:H|$, we conclude that $|G:C_{G}(x)|=1$, contradicting to $|X>1|$.
\end{proof}
The following theorem provides a basis for denying some subgroups to be perfect codes in connected Cayley sum graphs.
\begin{theorem}
\label{core}
Let $G$ be a group and $H$ a subgroup of $G$. If the core of $H$ in $G$ is not contained in the center of $G$, then there is no connected Cayley sum graph of $G$ admitting $H$ as a perfect code. In particular, if $H$ is normal in $G$ and not contained in the center of $G$, then $H$ is not a perfect code of any connected Cayley sum graph of $G$.
\end{theorem}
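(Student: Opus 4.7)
The plan is to argue by contradiction, using Theorem \ref{normal}(i) to pin down the elements of the connection set and then Lemma \ref{connected} to force the core of $H$ into the center. So I would begin by letting $N$ denote the core of $H$ in $G$ and assuming, for the sake of contradiction, that there exists a connected Cayley sum graph $\CS(G,X)$ of $G$ with $H \in \mathcal{P}(G,X)$.

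Next, I would apply Theorem \ref{normal}(i) to conclude that $N \subseteq C_G(x)$ for every $x \in X$. Rewriting this symmetrically, this says that every $x \in X$ centralizes $N$, i.e.\ $X \subseteq C_G(N)$. Since $C_G(N)$ is a subgroup of $G$, taking the subgroup generated by $X$ gives $\langle X \rangle \subseteq C_G(N)$.

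Now I would invoke the connectivity hypothesis via Lemma \ref{connected}, which in particular requires $G = \langle X \rangle$. Combining this with the previous inclusion yields $G \subseteq C_G(N)$, i.e.\ every element of $G$ commutes with every element of $N$, which is precisely the statement that $N \subseteq Z(G)$. This contradicts the assumption that the core of $H$ is not contained in $Z(G)$, completing the argument.

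For the ``in particular'' clause, the observation is that when $H$ is normal in $G$ its core equals $H$ itself, so the hypothesis ``$H$ is not contained in $Z(G)$'' is exactly the hypothesis of the main statement applied to $H$. I do not foresee a real obstacle here: the proof is a short chain of inclusions, and the only subtlety is making sure one uses both clauses of Lemma \ref{connected} correctly, but only the generation condition $G=\langle X\rangle$ is actually needed.
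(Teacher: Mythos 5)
Your proposal is correct and follows essentially the same route as the paper: both proofs combine Theorem \ref{normal}(i) with the generation condition $G=\langle X\rangle$ from Lemma \ref{connected}; the paper phrases it contrapositively (some $x\in X$ fails to centralize the core) while you phrase it as a contradiction via $\langle X\rangle\subseteq C_G(N)$, but the content is identical.
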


\begin{proof}
Let $X$ be an arbitrary normal subset of $G$ such that the Cayley sum graph $\CS(G,X)$ is connected. By Lemma \ref{connected} we have $G=\langle X\rangle$. Since the core $N$ of $H$ in $G$ is not contained in the center of $G$, we conclude that $N$ is not contained in $C_{G}(x)$ for some $x\in X$.
By Theorem \ref{normal} (i), we have that $H\notin\mathcal{P}(G,X)$.

For the spacial case that $H$ is normal in $G$, the core of $H$ in $G$ is $H$ itself. Therefore $H$ is not a perfect code of any connected Cayley sum graph of $G$ provided that $H$ is not contained in the center of $G$.
\end{proof}
\section{Total perfect codes}
\label{sec:tpc}

In this section, we give some results on total perfect codes which are analogous to the results about perfect codes in Section \ref{sec:pc}.

\smallskip
Parallel to Lemma \ref{iff}, we obtain a necessary and  sufficient condition of a subgroup of a given group being a total perfect code in a Cayley sum graph of the group as follows.
\begin{lem}
\label{tiff}
Let $G$ be a group and $H$ a subgroup of $G$. Then $H$ is a total perfect code of some Cayley sum graph of $G$ if and only if $G$ contains a normal subset $Y$ such that $Y$ is a left transversal of $H$ in $G$ and the unique common element of $H$ and $Y$ is a nonsquare of $H$.
\end{lem}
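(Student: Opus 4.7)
The plan is to model the proof on that of Lemma \ref{iff}, with the role of ``$X \cup \{1\}$'' now played by $Y$ itself and the role of the identity $1$ played by the distinguished element $z \in Y \cap H$.

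For the sufficiency direction ($\Leftarrow$), given a normal subset $Y$ of $G$ which is a left transversal of $H$ with $Y \cap H = \{z\}$ and $z$ a nonsquare of $H$, I would verify $H \in \mathcal{T}(G, Y)$ directly by counting neighbors. For $g \in G \setminus H$ the neighbors of $g$ in $H$ correspond to $gH \cap Y$, a singleton by the transversal hypothesis, and the constraint $g \neq h$ is automatic. For $g \in H$ any neighbor $h \in H$ must satisfy $gh \in Y \cap H = \{z\}$, forcing $h = g^{-1}z$, and the inequality $h \neq g$ reduces to $g^2 \neq z$, which holds precisely because $z$ is a nonsquare.

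For the necessity direction ($\Rightarrow$), starting from $H \in \mathcal{T}(G, Y_0)$ for some normal $Y_0$, I would run the same count: for $g \notin H$ it yields $|gH \cap Y_0| = 1$, and for $g \in H$ it yields $|H \cap Y_0| - [g^2 \in Y_0] = 1$. These constraints split into two scenarios. In scenario (A), $|H \cap Y_0| = 1$ and no square of $H$ lies in $Y_0$; then $Y_0$ is itself a left transversal whose unique element in $H$ is a nonsquare, so $Y := Y_0$ works. In scenario (B), $|H \cap Y_0| = 2$, $1 \in Y_0$, and every square of $H$ belongs to $Y_0 \cap H = \{1, z\}$; the natural candidate here is $Y := Y_0 \setminus \{1\}$, which is normal because $\{1\}$ is its own conjugacy class and is a left transversal of $H$ with $Y \cap H = \{z\}$.

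The main obstacle is the sub-case of (B) in which $z$ is itself a square of $H$, so that $Y_0 \setminus \{1\}$ fails the nonsquare requirement. I expect to treat this by exploiting the normality of $Y_0$ together with the one-per-non-identity-coset condition to constrain the conjugates of $z$, forcing $z^G = \{z\}$, i.e.\ $z \in Z(G)$; since the squaring map $h \mapsto h^2$ on $H$ has image $\{1,z\}$ of size two, $H$ has order at least three and contains a nonsquare $w$, and a parallel conjugacy argument lets me choose $w$ with $w^G = \{w\}$. The modified set $Y := (Y_0 \setminus \{1,z\}) \cup \{w\}$ is then a normal left transversal of $H$ whose sole intersection with $H$ is the nonsquare $w$, as the lemma demands.
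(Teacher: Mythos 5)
Your sufficiency direction is correct and is essentially the paper's argument. In the necessity direction you have put your finger on a real subtlety that the paper's own proof glosses over: since adjacency excludes $g=h$, the condition that $g\in H$ has a unique neighbour in $H$ reads $|H\cap Y_0|-[g^2\in Y_0]=1$, so besides your scenario (A) one must also consider scenario (B), where $|H\cap Y_0|=2$, $1\in Y_0$ and every square of $H$ lies in $H\cap Y_0=\{1,z\}$. (The paper simply identifies ``adjacent to a unique $h\in H$'' with ``uniquely written as $yh^{-1}$'' and concludes that the given $Y_0$ is itself the required transversal; your reading of the definition is the more careful one.) Scenario (A), and scenario (B) with $z$ a nonsquare of $H$, are handled correctly.

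However, your treatment of the remaining sub-case of (B), where $z$ is a square of $H$, has a genuine gap: neither of the two facts you invoke is forced. The information available ($z^G\subseteq Y_0$, $z^G\cap H=\{z\}$, and $|aH\cap Y_0|=1$ for $aH\neq H$) only shows that distinct conjugates of $z$ lie in distinct left cosets of $H$; it does not force $z^G=\{z\}$. Concretely, take $G=(\mathbb{Z}_4\times\mathbb{Z}_4)\rtimes\langle s\rangle$ with $s$ an involution swapping the two coordinates, $H=\mathbb{Z}_4\times\{0\}$ and $z=(2,0)=h^2$ for a generator $h$ of $H$. Then $z^G=\{(2,0),(0,2)\}\neq\{z\}$, yet $Y_0=\{(0,0)\}\cup\{(2,0),(0,2)\}\cup\{(1,3),(3,1)\}\cup\{(c,-c)s:c\in\mathbb{Z}_4\}$ is a normal subset realizing $H$ as a total perfect code of $\CS(G,Y_0)$ with $H\cap Y_0=\{1,z\}$ and $z$ a square of $H$. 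Moreover the nonsquares of $H$ here are $(1,0)$ and $(3,0)$, neither of which is central in $G$, so a set of the form $(Y_0\setminus\{1,z\})\cup\{w\}$ is never normal. The lemma itself still holds in this example, but only via a globally different connection set, e.g.\ $Y=(1,0)^G\cup(2,3)^G\cup s^G$, not via a local modification of $Y_0$. So the sub-case you isolated is genuinely attainable and is not closed by your argument; as written, the necessity direction of your proof is incomplete.
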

\begin{proof}
$\Rightarrow$) Let $H\in\mathcal{T}(G,Y)$ for some normal subset $Y$  of $G$. Then every $g\in G$ is adjacent to a unique vertex $h\in H$, that is, every $g\in G$ can be uniquely written as $g=yh^{-1}$ for some $y\in Y$ and $h\in H$. Therefore $Y$ is a left transversal of $H$ in $G$. In particular, $|H\cap Y|=1$. Set $H\cap Y=\{z\}$. Since $H$ induces a matching of $\mathcal{T}(G,Y)$, we conclude that $z$ is a nonsquare of $H$.

$\Leftarrow$) Let $Y$ be a normal subset of $G$ such that $Y$ is a left transversal of $H$ in $G$  and the unique common element of $H$ and $Y$ is a nonsquare of $H$. Then every $g\in G\setminus H$ can be uniquely written as $g=yh^{-1}$ for some $y\in Y$ and $h\in H$. Therefore $H$ induces a matching of $\CS(G,Y)$ and every $g\in G\setminus H$ is adjacent to a unique vertex $h\in H$. Thus $H\in\mathcal{T}(G,Y)$.
\end{proof}

The following obvious result is the counterpart of Lemma \ref{auto}.
\begin{lem}
\label{tauto}
Let $G$ be a group, $H$ a subgroup of $G$, $Y$ a normal subset of $G$ and $\sigma$ an automorphism of $G$. Then $H\in\mathcal{T}(G,Y)$ if and only if $H^{\sigma}\in\mathcal{T}(G,Y^{\sigma})$. In particular, $H\in\mathcal{T}(G,Y)$ if and only if $H^g\in \mathcal{P}(G,Y)$ for a given $g\in G$.
\end{lem}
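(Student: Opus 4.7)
The plan is to mimic the proof of Lemma \ref{auto} by replacing the characterization given in Lemma \ref{iff} with the total-perfect-code characterization in Lemma \ref{tiff}. First I would recall the computation displayed just before Lemma \ref{auto}, which shows that for any automorphism $\sigma$ of $G$, the set $Y$ is normal in $G$ if and only if $Y^{\sigma}$ is normal in $G$. Hence it is enough to verify that, for a normal subset $Y$, the pair $(H,Y)$ satisfies the two conditions of Lemma \ref{tiff} precisely when the pair $(H^{\sigma},Y^{\sigma})$ does.

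Next I would check the transversal condition and the nonsquare condition in turn. Since $\sigma$ is a bijection of $G$ mapping $H$ onto $H^{\sigma}$ and cosets onto cosets, it is clear that $Y$ is a left transversal of $H$ in $G$ if and only if $Y^{\sigma}$ is a left transversal of $H^{\sigma}$ in $G$; in particular $|H\cap Y|=1$ if and only if $|H^{\sigma}\cap Y^{\sigma}|=1$, and in that case, writing $H\cap Y=\{z\}$, we have $H^{\sigma}\cap Y^{\sigma}=\{z^{\sigma}\}$. Because $\sigma$ restricts to a bijection on $H$, an equation $z=h^{2}$ with $h\in H$ holds if and only if $z^{\sigma}=(h^{\sigma})^{2}$ with $h^{\sigma}\in H^{\sigma}$, so $z$ is a nonsquare of $H$ precisely when $z^{\sigma}$ is a nonsquare of $H^{\sigma}$. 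Combining these observations with Lemma \ref{tiff} yields the equivalence $H\in\mathcal{T}(G,Y)\Leftrightarrow H^{\sigma}\in\mathcal{T}(G,Y^{\sigma})$.

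For the ``in particular'' part, I would specialize to the inner automorphism $\sigma\colon x\mapsto g^{-1}xg$. Then $H^{\sigma}=H^{g}$, and since $Y$ is normal in $G$ one has $Y^{\sigma}=Y$, so the general equivalence reduces to $H\in\mathcal{T}(G,Y)\Leftrightarrow H^{g}\in\mathcal{T}(G,Y)$.

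Since each of the three conditions (normality of the connection set, being a left transversal, and being a nonsquare) is transparently preserved by any automorphism, there is no genuine obstacle here; the only minor point worth explicit comment is that the nonsquare property is an intrinsic property of $H$ and must be read relative to $H^{\sigma}$ rather than $G$, but this follows immediately from $\sigma(H)=H^{\sigma}$.
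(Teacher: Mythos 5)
Your proof is correct and follows exactly the route the paper intends: the paper omits a proof, declaring the lemma an ``obvious'' counterpart of Lemma \ref{auto}, and your argument is precisely the adaptation of that proof with Lemma \ref{tiff} replacing Lemma \ref{iff} plus the (correctly handled) extra check that the nonsquare condition is preserved. Note only that the statement's ``in particular'' clause contains a typo ($\mathcal{P}(G,Y)$ should read $\mathcal{T}(G,Y)$), which your specialization to inner automorphisms implicitly and correctly fixes.
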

The following Lemma is akin to Lemma \ref{coset}. Its proof is omitted as it is similar to the proof of Lemma \ref{coset}.
\begin{lem}
\label{tcoset}
Let $G$ be a group, $H$ a subgroup of $G$, $Y$ a normal subset of $G$ and $b$ an involution of $G$ such that $H^b=H$.
Then the $H\in \mathcal{T}(G,X)$ if and only if $Hb\in \mathcal{T}(G,X)$.
\end{lem}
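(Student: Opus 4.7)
The plan is to mirror the proof of Lemma \ref{coset}, substituting Lemma \ref{tiff} for Lemma \ref{iff} at each step. For the forward direction, assume $H \in \mathcal{T}(G, Y)$; Lemma \ref{tiff} then supplies that $Y$ is a left transversal of $H$ in $G$ and that $Y \cap H = \{z\}$, where $z$ is a nonsquare of $H$. Since $b$ is an involution with $H^b = H$, we have $bH = Hb$, and consequently $(Hb)(Hb) = H H^b = H$; intersecting with $Y$ gives $(Hb)(Hb) \cap Y = \{z\}$. This isolates exactly one pair of elements of $Hb$ whose product lies in $Y$ and provides the matching structure that $Hb$ is to carry.

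Next, for an arbitrary $g \in G$, I would locate its unique neighbor in $Hb$. The set $gHb = (gb)H$ is a single left coset of $H$, so by the transversal property it meets $Y$ in exactly one element; writing this element as $g(bh_0)$ with $h_0 \in H$ identifies $bh_0 \in Hb$ as the unique candidate neighbor of $g$. When $g \notin Hb$ this candidate is automatically distinct from $g$. When $g \in Hb$, the possibility $g = bh_0$ would force $g^2 = z$ (since $g^2 \in (Hb)^2 = H$ and $g^2 \in Y$, whence $g^2 \in Y \cap H = \{z\}$). I would rule this case out by combining the nonsquare condition on $z$ in $H$ with the identity $z^b = z$, which follows because both $Y$ and $H$ are $b$-invariant and $Y \cap H$ is a singleton. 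Assembling all cases yields $Hb \in \mathcal{T}(G, Y)$.

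The converse direction is obtained symmetrically: since $(Hb)\cdot b = Hb^2 = H$, applying the forward implication to the coset $Hb$ (with the same involution $b$) recovers $H \in \mathcal{T}(G, Y)$.

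The main obstacle I anticipate is the case $g \in Hb$ in the forward direction. In the analogous Lemma \ref{coset}, the corresponding issue does not arise, because $Hb$ is automatically independent (one has $(Hb)(Hb) \cap X = H \cap X = \emptyset$), so $g \in Hb$ need not have any neighbor in $Hb$. Here, by contrast, $Hb$ must carry a perfect matching inside $\CS(G, Y)$, and the ``self-loop'' configuration $g^2 = z$ with $g \in Hb$ must be excluded explicitly, using the nonsquare condition on $z$ together with $z^b = z$. Once that point is handled, the remaining bookkeeping tracks the proof of Lemma \ref{coset} verbatim.
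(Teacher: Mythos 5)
You have identified the crux correctly: unlike in Lemma \ref{coset}, in the total case the vertices of $Hb$ themselves must have a neighbour in $Hb$, and the only danger is a vertex $g\in Hb$ with $g^2=z$. But your proposed way of excluding this does not work. Writing $g=hb$ with $h\in H$, one has $g^2=hh^b$, and the two facts you invoke, that $z$ is a nonsquare of $H$ and that $z^b=z$, do not forbid $hh^b=z$: from $z=hh^b$ and $z^b=z$ you only get $h^bh=hh^b$, i.e.\ that $h$ and $h^b$ commute, and a product of two commuting elements of $H$ need not be a square of $H$ (think of $h$ and $h^b$ two distinct commuting involutions). So the decisive step is asserted rather than proved. (A smaller point: your derivation of the converse, ``apply the forward implication to $Hb$'', is not legitimate as stated, because $Hb$ is not a subgroup; the converse would have to be argued directly and meets the same difficulty.)

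Moreover, the gap cannot be repaired in general, because the required claim is false. Let $H=\langle x,y\mid x^4=y^4=1,\ y^{-1}xy=x^{-1}\rangle$, a group of order $16$ whose squares are exactly $\{1,x^2,y^2\}$, so that $z:=x^2y^2$ is a central nonsquare of $H$. Let $G$ be the central product of $H$ with a cyclic group $\langle w\rangle$ of order $4$ amalgamated along $w^2=y^2$; then $|G:H|=2$ and $w\in Z(G)\setminus H$. The set $Y:=\{z,w\}$ is a normal subset of $G$ and a left transversal of $H$ in $G$ whose unique element in $H$ is the nonsquare $z$, so $H\in\mathcal{T}(G,Y)$ (by Lemma \ref{tiff}, or by direct inspection of $\CS(G,Y)$). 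The element $b:=y^{-1}w$ is an involution outside $H$ with $H^b=H$, but $g:=xw\in Hb$ satisfies $g^2=x^2w^2=x^2y^2=z$; its only candidate neighbours in $Hb$ are $g^{-1}z=g$ itself (excluded) and $g^{-1}w=x^{-1}\in H$, so $g$ has no neighbour in $Hb$ and hence $Hb\notin\mathcal{T}(G,Y)$. Thus the forward implication fails for this $G,H,Y,b$. Note that the paper's own justification (``proof omitted, similar to Lemma \ref{coset}'') silently skips exactly the case you flagged, since in the perfect-code setting vertices of $Hb$ need no neighbour in $Hb$; your instinct about where the analogy breaks is right, but the break is fatal rather than a bookkeeping issue (the statement does hold under extra hypotheses, e.g.\ when $H$ is abelian or $b$ centralizes $H$, and your argument would go through there, but not in general).
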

The following two theorems are the counterparts of Theorem \ref{normal} and \ref{core}. Note that Theorem \ref{coret} can be seen as a corollary of Theorem \ref{normalt}. Its proof is similar to that of Theorem \ref{core} and therefore omitted.
\begin{theorem}
\label{normalt}
Let $G$ be a group, $Y$ a normal subset of $G$, and $H$ a subgroup total perfect code of the Cayley sum graph $\CS(G,Y)$. Then the following statements hold.
\begin{enumerate}
  \item $H$ is contained in $C_{G}(z)$ where $z$ is the unique common element of $H$ and $X$.
  \item The core of $H$ in $G$ is contained in $C_{G}(y)$ for each $y\in Y$.
  \item If $Y=\dot\cup_{i=1}^{s}y_{i}^{G}$ and $H$ is normal in $G$, then $\sum_{i=1}^{s}\frac{1}{|C_{G}(y_i):H|}=1$.
  \item If $|Y|>1$ and $H$ is normal in $G$, then $Y$ is a union of at least two conjugacy classes of elements in $G$.
\end{enumerate}
\end{theorem}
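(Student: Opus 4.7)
The plan is to mirror the proof of Theorem \ref{normal}, but with the twist that we now exploit the unique ``self-intersection'' $H\cap Y=\{z\}$ guaranteed by Lemma \ref{tiff}, rather than the disjointness $H\cap X=\emptyset$ from the perfect-code setting. Throughout, I will freely use that $Y$ is a left transversal of $H$ in $G$ and that $z$ is the nonsquare of $H$ forming the unique element of $H\cap Y$.

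For (i), I would argue directly: for any $h\in H$, the element $h^{-1}zh$ lies in $Y$ because $Y$ is normal in $G$, and lies in $H$ because $z,h\in H$. Since $|H\cap Y|=1$, this forces $h^{-1}zh=z$; hence $H\subseteq C_G(z)$.

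For (ii), let $N$ be the core of $H$ in $G$ and pick $n\in N$ and $y\in Y$. Normality of $Y$ gives $n^{-1}yn\in Y$; writing $n^{-1}yn=y\cdot(y^{-1}n^{-1}y)\cdot n$ and using $y^{-1}n^{-1}y\in N$ (from normality of $N$ in $G$) together with $n\in N\subseteq H$, I obtain $n^{-1}yn\in yH$. Since $Y$ is a left transversal of $H$ and both $y$ and $n^{-1}yn$ lie in $Y\cap yH$, uniqueness forces $n^{-1}yn=y$, so $n\in C_G(y)$. For (iii), I would combine $|Y|=|G:H|$ (from Lemma \ref{tiff}) with the decomposition $|Y|=\sum_{i=1}^{s}|G:C_G(y_i)|$ and then use (ii)---noting that when $H$ is normal, $H$ equals its own core---to substitute $|G:C_G(y_i)|=|G:H|/|C_G(y_i):H|$; dividing through by $|G:H|$ yields the stated identity.

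Finally, for (iv), I would argue by contradiction: suppose $Y=y^G$ is a single conjugacy class while $|Y|>1$ and $H$ is normal in $G$. The unique element $z$ of $H\cap Y$ has the form $z=y^g$ for some $g\in G$, so $z^G=y^G=Y$; but normality of $H$ gives $z^G\subseteq H$, forcing $Y\subseteq H\cap Y=\{z\}$, contradicting $|Y|>1$. Note that this route is substantially cleaner than the divisibility argument used for Theorem \ref{normal}(iii), so no analogous arithmetic manipulation is required. I do not anticipate a major obstacle; the only subtle point is to ensure in (ii) that the ``commutator ingredient'' $y^{-1}n^{-1}y$ lies in $N$, which uses normality of $N$ in the whole of $G$, not merely $N\leq H$---so the argument must invoke the core property at precisely the right moment.
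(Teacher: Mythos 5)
Your proofs of (i), (ii) and (iii) match the paper's approach: (i) is verbatim the paper's argument (conjugating $z$ by $h\in H$ and invoking uniqueness of $H\cap Y$), while (ii) and (iii) are exactly the adaptations of the proof of Theorem \ref{normal}(i),(ii) that the paper omits as ``similar'' --- in particular your counting in (iii) correctly replaces $1+|X|=|G:H|$ by $|Y|=|G:H|$, which is why the $\frac{1}{|G:H|}$ term disappears. For (iv) you genuinely depart from the paper's template, and for good reason: the divisibility argument of Theorem \ref{normal}(iii) translates here only to $\frac{1}{|C_G(y):H|}=1$, i.e.\ $C_G(y)=H$, which is not by itself a contradiction; one must additionally use that the unique element $z$ of $H\cap Y$ generates the whole class $Y=z^G$, which normality of $H$ traps inside $H$, forcing $|Y|=1$. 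Your argument does exactly this and is both correct and cleaner than the arithmetic route; it is arguably the argument the omitted proof has to contain anyway. The one point worth making explicit in (ii) is that ``$Y$ is a left transversal of $H$'' means each left coset $yH$ meets $Y$ in exactly one element, which is what licenses the conclusion $n^{-1}yn=y$ from $n^{-1}yn\in Y\cap yH$; you use this correctly.
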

\begin{proof}
(i) Let $z$ be the unique common element of $H$ and $Y$. Since $Y$ is normal in $G$, we have $y^h\in Y$ for every $h\in H$. Therefore $y^h\in H\cap Y$ as $z,h\in H$. By the uniqueness of $z$, we have $z^h=z$ and it follows that $H$ is contained in $C_{G}(z)$.

The proofs of (ii), (iii) and (iv) are omitted as they are similar to the proof of Theorem \ref{normal}.
\end{proof}
\begin{theorem}
\label{coret}
Let $G$ be a group and $H$ a subgroup of $G$. If the core of $H$ in $G$ is not contained in the center of $G$, then there is no connected Cayley sum graph of $G$ admitting $H$ as a total perfect code.
\end{theorem}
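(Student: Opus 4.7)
The plan is to mirror the proof of Theorem \ref{core} almost verbatim, substituting total perfect codes for perfect codes and Theorem \ref{normalt}(ii) for Theorem \ref{normal}(i). The strategy is a contrapositive/contradiction argument: assume some connected Cayley sum graph $\CS(G,Y)$ has $H$ as a total perfect code, and derive that the core of $H$ must lie in $Z(G)$.

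First I would fix an arbitrary normal subset $Y$ of $G$ for which $\CS(G,Y)$ is connected and suppose, toward a contradiction, that $H \in \mathcal{T}(G,Y)$. Invoking Lemma \ref{connected} gives $G = \langle Y \rangle$, so $G$ is generated as a group by $Y$ (the extra index-two condition $|G : \langle Y^{-1}Y\rangle| \le 2$ is not needed here). Let $N$ denote the core of $H$ in $G$.

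The key observation, which is the same as in Theorem \ref{core}, is the following elementary fact: if $N \subseteq C_G(y)$ for every $y \in Y$, then $N$ centralizes every element of $\langle Y \rangle = G$, so $N \subseteq Z(G)$. Taking the contrapositive, the hypothesis that $N \not\subseteq Z(G)$ forces the existence of some $y \in Y$ with $N \not\subseteq C_G(y)$. At this point Theorem \ref{normalt}(ii) supplies the contradiction, since that statement says that the core of any subgroup total perfect code of $\CS(G,Y)$ must centralize every element of $Y$.

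I do not expect any serious obstacles: the argument is purely structural and uses only generation of $G$ by $Y$ together with the necessary condition already proved in Theorem \ref{normalt}(ii). The one small point worth double-checking is that Lemma \ref{connected} is being applied correctly to Cayley sum graphs in the present sense (loops removed), which the excerpt explicitly addresses just before the statement of that lemma, so no additional work is needed there.
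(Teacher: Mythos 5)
Your proposal is correct and is exactly the argument the paper intends: the paper omits the proof of Theorem \ref{coret}, stating only that it is ``similar to that of Theorem \ref{core},'' and your substitution of Theorem \ref{normalt}(ii) for Theorem \ref{normal}(i) reproduces that argument faithfully. No gaps.
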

\section{For some spacial families of groups}
\label{sec:groups}
\subsection{Abelian groups}
The following result is obvious.
\begin{theorem}
\label{psgc}
Every subgroup $H$ of an abelian group $G$ is a perfect code of some Cayley sum graph $\CS(G,X)$ of $G$.
\end{theorem}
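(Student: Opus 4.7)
The plan is to reduce the claim to Lemma \ref{iff} and exploit the abelian hypothesis to make the normality condition free. By Lemma \ref{iff}, it suffices to exhibit a normal subset $X$ of $G$ such that $X\cup\{1\}$ is a left transversal of $H$ in $G$.

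To construct such an $X$, I would first choose a left transversal $T$ of $H$ in $G$ that contains $1$; this is always possible by selecting $1$ as the coset representative of $H$ itself and any element as the representative of each remaining coset. Then I would set $X:=T\setminus\{1\}$, so that $X\cup\{1\}=T$ is, by construction, a left transversal of $H$ in $G$.

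It remains to verify that $X$ is normal in $G$, and this is the only place where the abelian hypothesis enters. Since $g^{-1}xg=x$ for every $g\in G$ and every $x\in X$, we have $g^{-1}Xg=X$ automatically, so $X$ is a normal subset of $G$. Applying Lemma \ref{iff} yields $H\in\mathcal{P}(G,X)$, as desired.

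There is no real obstacle: the theorem is essentially an immediate consequence of Lemma \ref{iff} together with the observation that in an abelian group every subset is normal, so the existence of the required connection set reduces to the trivial fact that every subgroup admits a left transversal containing the identity.
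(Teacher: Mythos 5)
Your proof is correct and takes essentially the same route as the paper: both reduce to Lemma \ref{iff} by choosing a left transversal of $H$ containing $1$ (the paper takes an arbitrary transversal $T$ and sets $X=T\setminus H$, which amounts to the same thing) and observe that normality of $X$ is automatic in an abelian group. No gaps.
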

\begin{proof}
Since $G$ is abelian, every subset of $G$ is normal in $G$. Let $T$ be a left transversal of $H$ in $G$. Set $X=T\setminus H$. Then $X$ is a normal subset of $G$ and $X\cup\{1\}$ is a left transversal of $H$ in $G$. By Lemma \ref{iff}, $H$ is a perfect code of the Cayley sum graph $\CS(G,X)$.
\end{proof}
Recall that the Frattini subgroup $\Phi(G)$ of $G$ consists of elements $x$ of $G$ such that $G=\langle S,x\rangle$ leads to $G=\langle S\rangle$ for any subset $S$ of $G$. The Hall $2'$-subgroup of $G$ is an odd order subgroup of index a power of $2$. The following obvious result can be checked directly.
\begin{lem}
\label{abelsquare}
Let $G$ be an abelian group. Let $Q$ and $H$ be the Sylow $2$-subgroup and the Hall $2'$-subgroup of $G$ respectively. Then an element $g$ of $G$ is a square if and only if $g\in\Phi(Q)K$.
\end{lem}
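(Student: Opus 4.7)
The plan is to decompose $G$ via its primary components and analyze squaring on each factor separately. Since $G$ is a finite abelian group, the Sylow $2$-subgroup $Q$ and the Hall $2'$-subgroup $H$ are both normal in $G$ and satisfy $G = Q \times H$ (internal direct product). (I read the displayed $K$ in the statement as a typo for $H$, which is the only odd-order subgroup named.) So every $g \in G$ has a unique expression $g = qh$ with $q \in Q$ and $h \in H$, and $g$ is a square exactly when both $q$ and $h$ are squares in their respective factors.

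First I would dispose of the $H$-component. Since $|H|$ is odd, the map $x \mapsto x^{2}$ is a bijection of $H$ (it is a homomorphism with trivial kernel because no nonidentity element has order $2$). Hence $H^{2} = H$ and every element of $H$ is automatically a square. This means the condition ``$g = qh$ is a square'' reduces to ``$q$ is a square in $Q$'', i.e.\ $q \in Q^{2}$.

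Next I would identify $Q^{2}$ with $\Phi(Q)$. For any finite $p$-group one has the standard identification $\Phi(P) = P^{p}[P,P]$. Applying this with $p = 2$ and using that $Q$ is abelian (so $[Q,Q] = 1$) gives $\Phi(Q) = Q^{2}$. Combining the two observations, $g = qh$ is a square in $G$ if and only if $q \in \Phi(Q)$ and $h \in H$, i.e.\ if and only if $g \in \Phi(Q) \cdot H$, which is the desired conclusion.

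There is no substantial obstacle here; the only subtlety worth spelling out is why $H^{2} = H$ (via coprimality of $2$ and $|H|$) and why $\Phi(Q) = Q^{2}$ (via the standard Frattini formula combined with abelianness), both of which the paper treats as background. The entire argument is essentially a bookkeeping exercise on the internal direct product decomposition $G = Q \times H$.
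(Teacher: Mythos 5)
Your proof is correct, and you have rightly identified that the $K$ in the statement is a typo for the Hall $2'$-subgroup (the paper's own later uses of the lemma write $H=\Phi(Q)K$ with $K$ the Hall $2'$-subgroup, so the mismatch is purely notational). The paper offers no proof at all, dismissing the lemma as ``obvious''; your argument --- splitting $G=Q\times H$, observing that squaring is a bijection on the odd-order factor, and invoking $\Phi(Q)=Q^{2}[Q,Q]=Q^{2}$ for the abelian $2$-group --- is exactly the verification the authors are implicitly relying on, so there is nothing to compare beyond noting that you have supplied the missing details correctly.
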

The following result can be found in \cite{MWY2022}. Here we give it a short proof.
\begin{theorem}
[\cite{MWY2022}]
\label{psgcr}
Let $G$ be an abelian group and $H$ a subgroup of $G$. Then $H$ is a perfect code of some regular Cayley sum graph of $G$ if and only if $H$ contains a non-square of $G$ or $H=\Phi(Q)K$ where $Q$ and $K$ are respectively the Sylow $2$-subgroup and the Hall $2'$-subgroup of $G$.
\end{theorem}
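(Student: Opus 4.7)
The plan is to combine Lemma \ref{iff} with Lemma \ref{abelsquare} to recast the problem as a question about cosets. Since $G$ is abelian, every subset of $G$ is automatically normal, and as noted in the introduction $\CS(G,X)$ is regular exactly when $X$ is square-free. Together with Lemma \ref{iff} this shows that $H$ is a perfect code in some regular Cayley sum graph of $G$ if and only if there is a square-free subset $X$ of $G$ such that $X\cup\{1\}$ is a left transversal of $H$ in $G$. Since the element $1$ already represents the identity coset $H$, such an $X$ exists if and only if every non-identity coset $Hg$ of $H$ in $G$ contains at least one non-square of $G$.

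Next I would apply Lemma \ref{abelsquare}, which identifies the set $S$ of squares of $G$ with the subgroup $\Phi(Q)K$. Because $S$ is a subgroup and $g\in Hg$, the inclusion $Hg\subseteq S$ is equivalent to the conjunction $g\in S$ and $H\subseteq S$. Therefore a non-identity coset of $H$ consisting entirely of squares exists if and only if $H\subseteq\Phi(Q)K$ and $\Phi(Q)K\setminus H\neq\emptyset$, i.e.\ $H\subsetneq\Phi(Q)K$. Negating, every non-identity coset of $H$ meets the complement of $\Phi(Q)K$ if and only if either $H\not\subseteq\Phi(Q)K$ (equivalently, $H$ contains a non-square of $G$) or $H=\Phi(Q)K$, which is precisely the dichotomy asserted in the theorem.

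The remaining bookkeeping is routine: for the "only if" direction one reads off the transversal from an admissible regular Cayley sum graph, while for the "if" direction one simply picks, in each non-identity coset, a non-square element (available by the coset analysis above) and takes $X$ to be their union. I do not anticipate a real obstacle, since the substantive content is already packaged in Lemmas \ref{iff} and \ref{abelsquare}; the only point that warrants care is the subgroup property of $S=\Phi(Q)K$, without which the equivalence $Hg\subseteq S \Leftrightarrow (g\in S \text{ and } H\subseteq S)$ would fail, but this is guaranteed by Lemma \ref{abelsquare} via the splitting $G=Q\times K$.
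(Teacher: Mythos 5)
Your argument is correct, and it rests on exactly the same two pillars as the paper's proof: Lemma \ref{iff} (perfect code $\Leftrightarrow$ $X\cup\{1\}$ is a left transversal) and Lemma \ref{abelsquare} (the squares of $G$ form the subgroup $S=\Phi(Q)K$). The difference is in the packaging. The paper argues the two implications separately: for the forward direction it assumes $H$ consists of squares and uses the unique factorization $g=yh$ along the transversal to force $\Phi(Q)K\subseteq H$; for the backward direction it starts from an arbitrary transversal $X\cup\{1\}$ and, when $H$ contains a non-square $a$, repairs $X$ by replacing its set $Z$ of squares with $aZ$, which stays in the same cosets and becomes square-free. You instead collapse both directions into a single equivalence: a square-free transversal exists iff every non-identity coset of $H$ meets the non-squares, and since $S$ is a subgroup the failure of this condition is exactly $H\subsetneq S$, whose negation is the stated dichotomy. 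Your route avoids the explicit $aZ$ construction and makes the role of the subgroup property of $S$ more transparent (as you note, the equivalence $Hg\subseteq S\Leftrightarrow g\in S\text{ and }H\subseteq S$ is where that property is indispensable); the paper's route is more constructive in that it exhibits a concrete square-free connection set obtained from any given transversal. Both are complete proofs; there is no gap in yours.
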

\begin{proof}
$\Rightarrow$) Let $\CS(G,X)$ be a regular Cayley sum graph of $G$ and $H$ a perfect code of $\CS(G,X)$. Assume that every element of $H$ is a square of $G$. It suffices to show that $H=\Phi(Q)K$. By Lemma \ref{abelsquare}, $H$ is contained in $\Phi(Q)K$. Let $g$ be an arbitrary element of $\Phi(Q)K$. Since $H$ a perfect code of $\CS(G,X)$, it follows from Lemma \ref{iff} that $X\cup\{1\}$ is a left transversal of $H$ in $G$. Therefore $g$ can be uniquely written as $g=yh$ for some $y\in X\cup\{1\}$ and $h\in H$. Since $h,g\in \Phi(Q)K$, we get $y\in \Phi(Q)K$. Therefore $y$ is a square. On the other hand, $X$ is square-free as $\CS(G,X)$ is regular. It follows that $y=1$ and $g\in H$. Therefore $H=\Phi(Q)K$.

$\Leftarrow$) Let $X\cup \{1\}$ be a left transversal of $H$ in $G$. Then $X\cap H=\emptyset$. Since $G$ is abelian, $X$ is normal in $G$. If $H=\Phi(Q)K$, then Lemma \ref{abelsquare} implies that $X$ is square-free in $G$. It follows that $\CS(G,X)$ is a regular Cayley sum graph of $G$ and $H$ a perfect code of $\CS(G,X)$. In what follows we assume that $H$ contains a non-square, say $a$. Let $Z$ be a subset of $X$ consists of elements being squares of $G$. If $Z=\emptyset$, then we set $Y=X$. If $Z\neq\emptyset$, then we set $Y=aZ\cup(X\setminus Z)$. It is obvious that $Y\cup \{1\}$ is a left transversal of $H$ in $G$ and $Y$ is a square-free normal subset of $G$. Therefore $\CS(G,Y)$ is a regular Cayley sum graph of $G$ and $H$ a perfect code of $\CS(G,Y)$.
\end{proof}
The following result can be directly deduced from Proposition \ref{relation1} and Theorem \ref{psgc}.
\begin{theorem}
\label{tpsgc}
Every even order subgroup $H$ of an abelian group $G$ is a total perfect code of some Cayley sum graph of $G$.
\end{theorem}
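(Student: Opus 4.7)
The plan is to apply Proposition \ref{relation1} together with Theorem \ref{psgc}. Since $G$ is abelian, every subgroup is normal and $Z(G)=G$. Thus the criterion of Proposition \ref{relation1} reduces, for $H\le G$, to the requirement that $H$ itself contain an element that is a nonsquare of $H$, together with $H$ being a perfect code in some Cayley sum graph of $G$. The latter is guaranteed for free by Theorem \ref{psgc}.

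Concretely, first I would observe that $H$ is normal in $G$ and $Z(G)\cap H=H$, so the ``$Z(G)$ contains a nonsquare of $H$'' clause of Proposition \ref{relation1} is equivalent to the existence of some $z\in H$ with $z\neq h^2$ for all $h\in H$. Next I would verify this existence using the hypothesis $2\mid|H|$: consider the squaring map $\p\colon H\to H$, $h\mapsto h^2$. Because $|H|$ is even and $H$ is abelian, Cauchy's theorem yields an element of order $2$ in $H$, so $\ker\p$ is nontrivial and $\p$ fails to be injective. Since $H$ is finite, $\p$ therefore fails to be surjective, and any element of $H\setminus\p(H)$ is a nonsquare of $H$ lying in $Z(G)\cap H$.

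Finally, Theorem \ref{psgc} supplies a normal subset $X\subseteq G$ such that $H\in\mathcal{P}(G,X)$. Both hypotheses of Proposition \ref{relation1} being now met, I conclude that $H$ is a total perfect code of some Cayley sum graph of $G$, which is exactly the assertion of the theorem.

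There is no real obstacle: the proof is essentially a three-line chain Theorem \ref{psgc} $+$ even-order-forces-nonsquare $+$ Proposition \ref{relation1}. The only subtlety worth making explicit is the elementary squaring-map argument that turns the numerical hypothesis $2\mid|H|$ into the structural statement needed by Proposition \ref{relation1}.
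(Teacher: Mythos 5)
Your proof is correct and follows exactly the route the paper intends: Theorem \ref{psgc} gives $H\in\mathcal{P}(G,X)$, and since $Z(G)=G$ for abelian $G$, Proposition \ref{relation1} only requires a nonsquare of $H$ inside $H$, which your squaring-map/Cauchy argument supplies from $2\mid|H|$. The paper simply states the result is ``directly deduced'' from these two facts, so your write-up is the same argument with the one nontrivial step (even order forces a nonsquare) made explicit.
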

By Proposition \ref{relation1} and Theorem \ref{psgcr}, we have the following result.
\begin{theorem}
\label{tpsgcr}
Let $G$ be an abelian group and $H$ an even order subgroup of $G$. Then $H$ is a total perfect code of some regular Cayley sum graph of $G$ if and only if $H$ contains a non-square of $G$ or $H=\Phi(Q)K$ where $Q$ and $K$ are respectively the Sylow $2$-subgroup and the Hall $2'$-subgroup of $G$.
\end{theorem}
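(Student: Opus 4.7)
The plan is to derive this theorem by combining Proposition \ref{relation1} with Theorem \ref{psgcr}. Because $G$ is abelian, $Z(G)=G$, so the condition in Proposition \ref{relation1} simplifies to requiring that $H$ contain an element which is not a square of any element of $H$. A useful preliminary observation is that the even-order hypothesis on $H$ automatically supplies such an element: the squares $H^{(2)}:=\{h^{2}:h\in H\}$ form a subgroup of the abelian group $H$, and the quotient $H/H^{(2)}$ is an elementary abelian $2$-group whose order exceeds one precisely when $|H|$ is even, so any $z\in H\setminus H^{(2)}$ is a nonsquare of $H$.

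For the forward direction, I would begin with a regular Cayley sum graph $\CS(G,Y)$ admitting $H$ as a total perfect code. By Lemma \ref{tiff}, $Y$ is a left transversal of $H$ in $G$ with $Y\cap H=\{z\}$ for some nonsquare $z$ of $H$. Setting $X:=Y\setminus\{z\}$ exactly as in the proof of Proposition \ref{relation1}, the set $X\cup\{1\}$ is a left transversal of $H$, and $X$ inherits square-freeness from $Y$, so by Lemma \ref{iff} the subgroup $H$ is a perfect code of the regular Cayley sum graph $\CS(G,X)$. Theorem \ref{psgcr} then forces $H$ to contain a nonsquare of $G$ or to equal $\Phi(Q)K$.

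For the backward direction, Theorem \ref{psgcr} produces a regular Cayley sum graph $\CS(G,X)$ in which $H$ is a perfect code. I would select $z\in H$ to be a nonsquare of $H$ (available from the preliminary observation) and form $Y:=X\cup\{z\}$; Lemma \ref{tiff} then certifies that $H$ is a total perfect code of $\CS(G,Y)$. The main obstacle, and the delicate point of the proof, is ensuring that $\CS(G,Y)$ remains regular: this forces $Y$ to be square-free, which in turn forces $z$ to be chosen as a nonsquare of $G$, not merely of $H$. When $H$ contains a nonsquare of $G$ the choice is immediate, since any such element lies in $H\setminus H^{(2)}$; for the case $H=\Phi(Q)K$ one must select $z$ and $X$ in concert, exploiting the specific structure of the perfect-code construction that underlies Theorem \ref{psgcr}.
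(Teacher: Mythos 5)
Your overall route---deriving the theorem from Proposition \ref{relation1} together with Theorem \ref{psgcr}---is exactly the paper's (the paper's own proof is a one-line citation of those two results), and your forward direction is correct and complete. The genuine problem is the backward direction in the case $H=\Phi(Q)K$, which you rightly isolate as the delicate point but then leave open, proposing to ``select $z$ and $X$ in concert.'' No such selection exists. As you yourself observe, regularity of $\CS(G,Y)$ forces $Y$ to be square-free in $G$, so the unique element $z$ of $H\cap Y$ supplied by Lemma \ref{tiff} must be a nonsquare of $G$; but by Lemma \ref{abelsquare} every element of $\Phi(Q)K$ is a square of $G$, so when $H=\Phi(Q)K$ there is no admissible $z$ at all, no matter how $X$ is chosen. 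Concretely, take $G=\mathbb{Z}_4$ and $H=\Phi(Q)K=\{0,2\}$ (of even order $2$): the square-free subsets of $G$ are precisely the subsets of $\{1,3\}$, none of which meets $H$, so $H$ is not a total perfect code of any regular Cayley sum graph of $G$ even though it satisfies the theorem's hypothesis.

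The conclusion to draw is not that your proof is missing an idea you failed to find, but that the $\Phi(Q)K$ disjunct of the statement is false as a sufficient condition, and the paper's one-line appeal to Proposition \ref{relation1} (which ignores regularity entirely) does not establish it. Indeed, your forward argument already proves more than claimed: since $z\in H\cap Y$ and $Y$ is square-free, $H$ contains a nonsquare of $G$ outright, so the correct equivalence is simply that $H$ is a total perfect code of some regular Cayley sum graph of $G$ if and only if $H$ contains a nonsquare of $G$. With that corrected statement, your argument closes completely: the forward direction is the observation just made, and the backward direction is your construction $Y=X\cup\{z\}$ with $z\in H$ a nonsquare of $G$ and $X$ the square-free connection set furnished by Theorem \ref{psgcr}.
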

\subsection{Dihedral groups}
Throughout this subsection. We use $\mathbb{D}_{2n}$ to denote the dihedral group order $2n$ which has a cyclic subgroup $\langle a\rangle$ of order $n$ and an involution $b\notin \langle a\rangle$ such that $a^b=a^{-1}$.
\begin{exam}
\label{D2nt}
The subset $X:=\{b,ab,a^2b,\ldots,a^{n-1}b\}$ is normal in $\mathbb{D}_{2n}$ and the Cayley sum graph $\Gamma:=\CS(\mathbb{D}_{2n},X)$ is isomorphic to the complete bipartite graph $K_{n,n}$. It is obvious that $\langle b\rangle$ is a total perfect code of $\Gamma$.
\end{exam}
\begin{exam}
\label{D4l}
Let $n=2\ell$ where $\ell$ is a positive integer. Then $b^{\mathbb{D}_{2n}}=\{b,a^2b,\ldots,a^{2\ell-2}b\}$ and $(ab)^{\mathbb{D}_{2n}}=\{ab,
a^3b,\ldots,a^{2\ell-1}b\}$. Set $Z=\{a^2,a^4,\ldots,a^{2\ell-2}\}$ and $Z'=\{a,a^3,\ldots,a^{2\ell-1}\}$. Then both $Z$ and $Z'$ are normal in $\mathbb{D}_{2n}$.
It is straightforward to check that $\langle ab\rangle$ is a perfect code of the Cayley sum graph
$\Gamma_0:=\CS(\mathbb{D}_{2n},b^{\mathbb{D}_{2n}}\cup Z)$ and $\langle b\rangle$ is a perfect code of the Cayley sum graph
$\Gamma_1:=\CS(\mathbb{D}_{2n},(ab)^{\mathbb{D}_{2n}}\cup Z)$. Moreover, $\langle b\rangle$ is a total perfect code of the Cayley sum graph
$\Gamma'_0:=\CS(\mathbb{D}_{2n},b^{\mathbb{D}_{2n}}\cup Z')$ and
$\langle ab\rangle$ is a total perfect code of the Cayley sum graph
$\Gamma'_1:=\CS(\mathbb{D}_{2n},(ab)^{\mathbb{D}_{2n}}\cup Z')$.
\end{exam}
\begin{exam}
\label{D2n}
Let $n=4k+2$ where $k$ is a positive integer. Then $b^{\mathbb{D}_{2n}}=\{b,a^2b,\ldots,a^{4k}b\}$ and $(ab)^{\mathbb{D}_{2n}}=\{ab,
a^3b,\ldots,a^{4k+1}b\}$.
Choose an inverse-closed left transversal $Z$ of $\langle a^{2k+1}\rangle$ in $\langle a\rangle$ (for example, let $Z=\{a,a^2,\ldots,a^k,a^{-1},a^{-2},\ldots,a^{-k},1\}$). Let
\begin{equation*}
 X=b^{\mathbb{D}_{2n}}\cup (Z\setminus\langle a^{2k+1}\rangle)
 ~\mbox{or}~(ab)^{\mathbb{D}_{2n}}\cup (Z\setminus \langle a^{2k+1}\rangle).
\end{equation*}
Then $X$ is a normal subset of $\mathbb{D}_{2n}$. It is straightforward to check that $\langle a^{2k+1}\rangle$ is a perfect code of the Cayley sum graph $\Gamma:=\CS(\mathbb{D}_{2n},X)$ and a total perfect code of the Cayley sum graph $\Gamma':=\CS(\mathbb{D}_{2n},X\cup\{a^{2k+1}\})$.
\end{exam}

\begin{theorem}
\label{pcd}
Every connected Cayley sum graph of $\mathbb{D}_{2n}$ has no nontrvial subgroup perfect code except the graphs $\Gamma_0$, $\Gamma_1$ in Example \ref{D4l} and $\Gamma$ in Example \ref{D2n}.
\end{theorem}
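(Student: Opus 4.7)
The plan is to narrow down candidate subgroups $H$ using Theorem~\ref{core}, then for each candidate identify all admissible normal subsets $X$ via Lemma~\ref{iff} together with the fact that $X$ is a union of $G$-conjugacy classes, and finally match the surviving admissible pairs $(H,X)$ with the explicit constructions in Examples~\ref{D4l} and~\ref{D2n}.

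First, I invoke Theorem~\ref{core}: since $Z(\mathbb{D}_{2n})=\{1\}$ when $n$ is odd and $Z(\mathbb{D}_{2n})=\{1,a^{n/2}\}$ when $n$ is even, the core of $H$ in $G$ must lie in this center. Running through the subgroup lattice of $\mathbb{D}_{2n}$---cyclic $\langle a^d\rangle$ for $d\mid n$ and dihedral $\langle a^d,a^ib\rangle$ for $d\mid n$ and $0\le i<d$---and using that every subgroup of $\langle a\rangle$ is normal in $G$ (hence equals its own core), while every dihedral subgroup has core containing its cyclic part $\langle a^d\rangle$, one sees that the only nontrivial candidates are: (a) a reflection subgroup $\langle a^ib\rangle$ of order $2$; (b) the center $\{1,a^{n/2}\}$ (only if $n$ is even); and (c) $\langle a^{n/2},a^ib\rangle$ of order $4$ (only if $n$ is even). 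By Lemma~\ref{auto}, inner automorphisms allow us to replace each candidate by a representative of its $G$-conjugacy class.

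Next, for each candidate I apply Lemma~\ref{iff} and use the conjugacy class decomposition to enumerate admissible $X$. The classes of $\mathbb{D}_{2n}$ are $\{1\}$, $\{a^{n/2}\}$ (if $n$ is even), $\{a^j,a^{-j}\}$ of size $2$ for $1\le j<n/2$, and either a single reflection class of size $n$ (when $n$ is odd) or the two parity classes $\{a^{2j}b\}$ and $\{a^{2j+1}b\}$ of size $n/2$ (when $n$ is even). In case (a) with $n$ odd, the reflection class necessarily meets $H$, so $X$ contains no reflections and hence $\langle X\rangle\subseteq\langle a\rangle\ne G$, contradicting Lemma~\ref{connected}. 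Case (c) is ruled out by a parity count when $n/2$ is even, since $|X|=n/2-1$ is odd while every non-central class has even size; and by a reflection-intersection argument when $n/2$ is odd, since both reflection classes meet $H$ in exactly one element and so neither can sit inside $X$, again forcing $X\subseteq\langle a\rangle$ and disconnection.

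Finally, in case (a) with $n$ even, a coset-by-coset accounting shows that $X$ must equal the union of the reflection class disjoint from $H$ with the set $Z=\{a^2,a^4,\ldots,a^{n-2}\}$, recovering exactly $\Gamma_0$ or $\Gamma_1$ of Example~\ref{D4l} according to whether $H=\langle ab\rangle$ or $H=\langle b\rangle$. In case (b), the parity argument forces $n\equiv 2\pmod 4$; then the cyclic cosets of $\{1,a^{n/2}\}$ are covered by one representative cyclic conjugacy class from each dual pair (yielding $Z\setminus\langle a^{n/2}\rangle$ for an inverse-closed transversal $Z$ of $\langle a^{n/2}\rangle$ in $\langle a\rangle$), while the reflection cosets are covered by a single reflection class, giving the graph $\Gamma$ of Example~\ref{D2n}. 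The main technical obstacle is this detailed coset bookkeeping in cases (a) and (b): one must track how each conjugacy class meets each coset of $H$, verify that the cumulative intersection sizes are exactly one per non-trivial coset, and confirm that the resulting $X$ satisfies both conditions of Lemma~\ref{connected}.
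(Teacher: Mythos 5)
Your proposal is correct and arrives at the same classification, but it organizes the reduction differently from the paper. You filter the full subgroup lattice of $\mathbb{D}_{2n}$ through Theorem \ref{core} first, which leaves three candidate families including the order-$4$ subgroups $\langle a^{n/2},a^ib\rangle$, and you must then dispose of that extra family by a separate parity/class-intersection argument. The paper instead starts from the counting identity $|H|(|X|+1)=2n$ supplied by Lemma \ref{iff}: connectedness forces $X$ to meet $\langle a\rangle b$, and since $|H|>1$ gives $|X|<n$, the set $X$ contains exactly one reflection class, of size $n/2$; hence $|X|+1>n/2$ and $|H|<4$. The core condition is then invoked only to exclude $|H|=3$ (odd-order subgroups of $\mathbb{D}_{2n}$ are normal but not central). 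This counting shortcut eliminates your case (c) for free and avoids any appeal to the subgroup lattice, while as a by-product it already establishes that $n$ is even. From the point $|H|=2$ onward the two arguments coincide: both split into the cases $H=\langle a^ib\rangle$ and $H=\langle a^{n/2}\rangle$, determine $X$ coset by coset (your bookkeeping in case (a) and the derivation of $Z=\{a^2,\ldots,a^{n-2}\}$ match the paper's Case 1, and your extraction of $n\equiv 2\pmod 4$ in case (b) matches the paper's observation that $Y\langle a^{\ell}\rangle=\langle a\rangle b$ fails for $\ell$ even). Your route is somewhat longer but more systematic and self-contained; the paper's is shorter but leans on the specific arithmetic relating $|X|$ to $|H|$.
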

\begin{proof}
Let $\Sigma:=\CS(\mathbb{D}_{2n},X)$ be a connected Cayley sum graph of $\mathbb{D}_{2n}$. Then $X$ is a normal subset of $G$. By Lemma \ref{connected}, we have $\mathbb{D}_{2n}=\langle X\rangle$.
Therefore $X\cap \langle a\rangle b \neq\emptyset$. Suppose that $\Sigma$ has a nontrvial subgroup perfect code, say $H$. It suffices to prove that $\Sigma$ coincides with one of the graphs $\Gamma_0$, $\Gamma_1$ in Example \ref{D4l} and $\Gamma$ in Example \ref{D2n}.  By Lemma \ref{iff}, $X\cup\{1\}$ is a left transversal of $H$ in $\mathbb{D}_{2n}$ and therefore $|H|(|X|+1)=2n$. Since $|H|>1$, we have $|X|<n$. It follows that $\langle a\rangle b$ is not contained in $X$. Therefore $n$ is even and exactly one of the two conjugacy classes  $b^{\mathbb{D}_{2n}}$ and $(ab)^{\mathbb{D}_{2n}}$ is contained in $X$. Set $Y=X\cap \langle a\rangle b$. Then $Y=b^{\mathbb{D}_{2n}}~\hbox{or}~(ab)^{\mathbb{D}_{2n}}$. In particular, $|Y|=\frac{n}{2}$. It follows that $\frac{n}{2}<|X|+1\leq n$. Since $|H|(|X|+1)=2n$, we have $|H|=2~\hbox{or}~3$. By Theorem \ref{normal}, the core of $H$ in $\mathbb{D}_{2n}$ is contained in the center of $\mathbb{D}_{2n}$. Note that every odd order subgroup of $\mathbb{D}_{2n}$ is normal in $\mathbb{D}_{2n}$ but not contained in the center of $\mathbb{D}_{2n}$. Therefore $|H|\neq3$. It follows that $|H|=2$ and $|X|=n-1$. Write $n=2\ell$. Then $|Y|=\ell$ and $|X\setminus Y|=\ell-1$. The remainder proof is divided into two cases.

\textsf{Case 1.} $H\cap \langle a\rangle b\neq \emptyset$.

In this case, $H=\langle a^ib\rangle$ for some $i\in\{0,1,\ldots,n-1\}$. Set $Z=\{a^2,a^4,\ldots,a^{2\ell-2}\}$. If $i$ is odd, then $Z$ is the unique subset of $\mathbb{D}_{2n}$ such that $b^{\mathbb{D}_{2n}}\cup Z\cup\{1\}$ is  normal in $\mathbb{D}_{2n}$ and a left transversal of $H$ in $\mathbb{D}_{2n}$. Therefore $\Sigma$ is the graph $\Gamma_0$ in Example \ref{D4l}. Similarly, if $i$ is even, then $\Sigma$ is the graph $\Gamma_1$ in Example \ref{D4l}.

\textsf{Case 2.} $H\cap \langle a\rangle b=\emptyset$.

In this case, $H=\langle a^\ell\rangle$. Set $Z=X\setminus Y$. Since $X\cup\{1\}$ is a left transversal of $\langle a^\ell\rangle$ in $\mathbb{D}_{2n}$, we conclude that $Z\cup\{1\}$ is a left transversal of $\langle a^\ell\rangle$ in $\langle a\rangle$ and $Y\langle a^\ell\rangle=\langle a\rangle b$. Note that $Y=Ya^i$ provided $i$ is even. Therefore $Y\langle a^\ell\rangle\neq\langle a\rangle b$ if $\ell$ is even. It follows that $\ell$ is odd. Thus $\Sigma$ is the graph $\Gamma$ in Example \ref{D2n}.
\end{proof}
\begin{theorem}
\label{tpcd}
Every connected Cayley sum graph of $\mathbb{D}_{2n}$ has no subgroup total perfect code except the graphs $\Gamma$ in Example \ref{D2nt}, $\Gamma'_0$, $\Gamma'_1$ in Example \ref{D4l} and $\Gamma'$ in Example \ref{D2n}.
\end{theorem}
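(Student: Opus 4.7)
The plan is to mirror the proof of Theorem \ref{pcd}, systematically using Lemma \ref{tiff} in place of Lemma \ref{iff}, Theorem \ref{normalt} in place of Theorem \ref{normal}, and Theorem \ref{coret} in place of Theorem \ref{core}. Let $\Sigma := \CS(\mathbb{D}_{2n}, Y)$ be a connected Cayley sum graph admitting a subgroup total perfect code $H$. Lemma \ref{tiff} yields $|H|\cdot|Y| = 2n$ and forces $|H|$ to be even (because the unique element of $H \cap Y$ must be a nonsquare of $H$); Lemma \ref{connected} yields $Y \cap \langle a\rangle b \neq \emptyset$.

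First I would restrict the candidates for $H$ via Theorem \ref{coret}, which demands that the core of $H$ in $\mathbb{D}_{2n}$ lie in $Z(\mathbb{D}_{2n}) = \{1\}$ or $\{1, a^{n/2}\}$. A direct check through the subgroup lattice of $\mathbb{D}_{2n}$ shows that the only even-order subgroups with central core are $\langle a^{n/2}\rangle$ (when $n$ is even), the order-two reflection subgroups $\langle a^i b\rangle$, and the Klein-four subgroups $\langle a^{n/2}, a^i b\rangle$ (when $n$ is even). By Lemma \ref{tauto} I may assume $i = 0$ in the latter two cases.

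Next I would treat each surviving case. If $H = \langle b\rangle$, so $|Y| = n$, the normality of $Y$ combined with $Y \cap \langle a\rangle b \neq \emptyset$ forces $Y$ to contain at least one full reflection conjugacy class. When $Y$ contains both reflection classes, $|Y| = n$ gives $Y = \langle a\rangle b$ and $\Sigma$ is the graph $\Gamma$ of Example \ref{D2nt}. Otherwise $n$ must be even, $Y$ contains exactly one reflection class of size $n/2$, and after analyzing the cosets $\{a^j, a^j b\}$ of $H$ and enforcing inverse-closure of $Y \cap \langle a\rangle$, one obtains $Y \cap \langle a\rangle = \{a, a^3, \ldots, a^{n-1}\}$, yielding $\Gamma'_0$ or $\Gamma'_1$ of Example \ref{D4l}. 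If $H = \langle a^{n/2}\rangle$ with $|Y| = n$, then $a^{n/2}$ is the unique nonsquare of $H$ lying in $Y$, and inversion acts on the rotation cosets $\{a^j, a^{j+n/2}\}$ of $H$ in $\langle a\rangle$; the existence of an inverse-closed transversal of these cosets that contains $a^{n/2}$ but not $1$ compels $n/2$ to be odd, i.e., $n = 4k+2$, after which $Y$ is exactly as prescribed in Example \ref{D2n} and $\Sigma$ is the graph $\Gamma'$.

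The main obstacle is ruling out the Klein-four case $H = \langle a^{n/2}, b\rangle$, where $|Y| = n/2$ coincides exactly with the size of a reflection conjugacy class, so Theorem \ref{coret} by itself gives no contradiction. A case analysis on the choice of $z \in H\cap Y$ (among $a^{n/2}, b, a^{n/2}b$), combined with the dependence on $n \bmod 4$ of the intersections $b^{\mathbb{D}_{2n}} \cap H$ and $(ab)^{\mathbb{D}_{2n}} \cap H$, shows that the condition $|Y \cap H| = 1$ together with normality forces $Y$ to be either a full reflection class or a subset of $\langle a\rangle$. In the first case $\langle Y\rangle$ is the index-two subgroup $\langle a^2, b\rangle$ or $\langle a^2, ab\rangle$; in the second $\langle Y\rangle \subseteq \langle a\rangle$. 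Both contradict Lemma \ref{connected}, so no Klein-four subgroup can be a total perfect code in a connected Cayley sum graph of $\mathbb{D}_{2n}$, completing the classification.
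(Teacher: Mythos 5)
Your proposal is correct, but it is organized differently from the paper's proof. The paper splits cases according to the connection set rather than the subgroup: (a) $\langle a\rangle b\subseteq X$, where $|H||X|=2n$ immediately forces $|H|=2$ and $X=\langle a\rangle b$; (b) the unique element of $H\cap X$ lies in $\langle a\rangle$, in which case the total-perfect-code problem is converted into a perfect-code problem by deleting that element (in the spirit of Proposition \ref{relation1}) and the already-proved Theorem \ref{pcd} is invoked to land in Example \ref{D2n}; (c) the remaining case, where a counting argument ($|X|>\frac{n}{2}$ together with $|H||X|=2n$) again forces $|H|=2$ and pins down $X$ as in Example \ref{D4l}. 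Because counting forces $|H|=2$ in every branch, the paper never enumerates candidate subgroups. You instead filter the subgroup lattice of $\mathbb{D}_{2n}$ through Theorem \ref{coret} first and then case-split on $H$; this is more self-contained (it does not reuse Theorem \ref{pcd}) but surfaces an extra family --- the Klein four subgroups $\langle a^{n/2},a^ib\rangle$ --- that the paper's counting sidesteps automatically, and which you must (and correctly do) eliminate via Lemma \ref{connected}, since $|Y|=n/2$ forces $Y$ to be a single reflection class or a subset of $\langle a\rangle$, neither of which generates $\mathbb{D}_{2n}$. One small remark: the ``inverse-closure'' of $Y\cap\langle a\rangle$ that you invoke is not an extra hypothesis but a consequence of normality of $Y$, the rotation conjugacy classes being the pairs $\{a^j,a^{-j}\}$; likewise, in the case $H=\langle b\rangle$ with exactly one reflection class in $Y$, the decisive fact is that the other reflection class must then be entirely disjoint from $Y$, which is what yields $Y\cap\langle a\rangle=\{a,a^3,\ldots,a^{n-1}\}$. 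Both routes are valid; the paper's is shorter because it leverages Theorem \ref{pcd}, while yours makes explicit exactly which subgroups survive the core--center test.
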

\begin{proof}
Let $\Sigma:=\CS(\mathbb{D}_{2n},X)$ be a connected Cayley sum graph admitting a subgroup total perfect code $H$.
Then $H$ is of even order, and $X$ is a normal subset of $\mathbb{D}_{2n}$ and a left transversal of $H$ in $\mathbb{D}_{2n}$.
It suffices to prove that $\Sigma$ coincides with one of the graphs $\Gamma$ in Example \ref{D2nt}, $\Gamma'_0$, $\Gamma'_1$ in Example \ref{D4l} and $\Gamma'$ in Example \ref{D2n}.
If $\langle a\rangle b\subseteq X$, then we have $|H|=2$ and $X=\langle a\rangle b$ as $2n=|H||X|$. It follows that $\Sigma$ is the graph $\Gamma$ in Example \ref{D2nt}.
 If the unique common element of $X$ and $H$ is contained in $\langle a\rangle$, then $H$ is subgroup perfect code of the Cayley sum graph $\CS(\mathbb{D}_{2n},X\setminus \langle a\rangle)$. By Theorem \ref{pcd}, we have that $\CS(\mathbb{D}_{2n},X\setminus \langle a\rangle)$ is the graph $\Gamma$ in Example \ref{D2n}. Therefore $\Sigma$ is the graph $\Gamma'$ in Example \ref{D2n}. Now we assume that $\langle a\rangle b$ is not a subset of $X$ and the unique common element of $X$ and $H$ is not contained in $\langle a\rangle$. Then $X\cap H=\langle a^ib\rangle$ for some $i\in \{1,\ldots,n\}$ and
 $(a^ib)^{\mathbb{D}_{2n}}\neq \langle a\rangle b$. It follows that $n$ is even, $X\cap\langle a\rangle b=(a^ib)^{\mathbb{D}_{2n}}$, $|(a^ib)^{\mathbb{D}_{2n}}|=\frac{n}{2}$ and $\langle(a^ib)^{\mathbb{D}_{2n}}\rangle\neq\mathbb{D}_{2n}$. Since $\Sigma$ is connected, we have $\mathbb{D}_{2n}=\langle X\rangle$. Therefore $X\setminus (a^ib)^{\mathbb{D}_{2n}}\neq\emptyset$ and so $|X|>\frac{n}{2}$. Since $|H|$ is even and $2n=|H||X|$, we have$|H|=2$ and $|X|=n$. In particular, $H=\langle a^ib\rangle$. Therefore $(a^ib)^{\mathbb{D}_{2n}}H=\{1,a^2,\ldots,a^{2\ell-2}\}\cup (a^ib)^{\mathbb{D}_{2n}}$ and it follows that $X\setminus (a^ib)^{\mathbb{D}_{2n}}=\{a,a^3,\ldots,a^{2\ell-1}\}$. Thus $\Sigma$ is the graph $\Gamma'_0$ or $\Gamma'_1$ in Example \ref{D4l}.
\end{proof}
\subsection{One-dimensional  affine groups}
Let $\mathbb{F}_q$ be the finite field of order $q$, $\mathrm{AGL}_1(q)$ the one-dimensional affine group over $\mathbb{F}_q$ where $q$ is a prime power. It is well known that  $\mathrm{AGL}_1(q)$ is a Frobenius group with an elementary abelian Frobenius kernel of order $q$ and cyclic Frobenius complements of order $q-1$. Throughout this subsection, let $G=\mathrm{AGL}_1(q)$, and use $K$ and $C$ to denote the Frobenius kernel and a Frobenius complement of $G$ respectively. By the Frobenius partitions (see \cite[Page 79]{KS2004}) of Frobenius groups, we have the following lemma.
\begin{lem}
\label{partition}
$G=K\cup(\cup_{x\in K}C^x)$.
\end{lem}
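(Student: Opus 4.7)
The plan is to deduce the identity directly from the classical Frobenius partition theorem cited from \cite[Page 79]{KS2004}. That theorem says that in any Frobenius group with kernel $K$ and complement $C$, the non-identity elements of $G$ partition as $(K\setminus\{1\})\sqcup\bigsqcup_{C'}(C'\setminus\{1\})$, where $C'$ ranges over the distinct $G$-conjugates of $C$. Reattaching the identity yields $G=K\cup\bigcup_{C'}C'$. So the whole task reduces to showing that the family $\{C^x:x\in K\}$ already exhausts every conjugate of $C$ in $G$.

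For that reduction I would use two standard properties of a Frobenius complement: $N_G(C)=C$ (Frobenius complements are self-normalising) and $G=K\rtimes C$ with $K\cap C=\{1\}$. From the first, the total number of $G$-conjugates of $C$ is $[G:N_G(C)]=[G:C]=|K|$. From both together, for any $x,y\in K$ one has $C^x=C^y$ iff $xy^{-1}\in N_G(C)=C$, iff $xy^{-1}\in K\cap C=\{1\}$, iff $x=y$. Hence the $|K|$ elements of $K$ produce $|K|$ pairwise distinct conjugates of $C$, and by the count these must be all of them.

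Combining the two steps gives $G=K\cup\bigcup_{x\in K}C^x$, which is the claim. There is no real obstacle: the only mild point is being careful with the direction of conjugation $C^x=x^{-1}Cx$ (matching the paper's convention) when writing out the equivalences, but once that is fixed the argument is just ``partition theorem plus a one-line counting/self-normalisation check''. An alternative formulation, if one prefers to avoid counting, is to write an arbitrary $g\in G$ as $g=ck$ with $c\in C$, $k\in K$ via $G=CK$, and note that $C^g=C^{ck}=(C^c)^k=C^k$ since $C^c=C$; this shows surjectivity of $x\mapsto C^x$ onto the set of conjugates directly, and then the partition theorem finishes the proof in the same way.
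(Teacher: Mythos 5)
Your argument is correct and follows essentially the same route as the paper, which offers no written proof and simply invokes the Frobenius partition from \cite[Page 79]{KS2004}. You merely make explicit the small step the paper leaves implicit, namely that the map $x\mapsto C^x$ from $K$ to the set of conjugates of $C$ is a bijection (or, in your cleaner second formulation, at least surjective via $G=CK$), which is a routine consequence of $N_G(C)=C$ and $K\cap C=\{1\}$.
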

We aim to classify Cayley sum graphs of one-dimensional affine groups admitting a subgroup (total) perfect code. Before doing that, we need prove the following lemma.
\begin{lem}
\label{length}
Let $Y$ be a conjugacy class of $G$ not containing $1$. Then either $Y=K\setminus\{1\}$ or
$|Y|=q$ and $|Y\cap C^{g}|=1$ for all $g\in G$.
\end{lem}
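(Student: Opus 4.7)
The plan is to distinguish the two cases $Y \subseteq K$ and $Y \not\subseteq K$; they are exhaustive because $K$ is normal in $G$ and hence a union of $G$-conjugacy classes.

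Suppose first $Y \subseteq K$. In $G = \mathrm{AGL}_1(q)$ the conjugation action of $C$ on $K$ is precisely the multiplicative action of $\mathbb{F}_q^\times$ on $(\mathbb{F}_q,+)$, which is transitive on the nonzero elements. Hence the single $G$-orbit containing any element of $Y$ is all of $K \setminus \{1\}$, so $Y = K \setminus \{1\}$.

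Suppose instead $Y \not\subseteq K$. Normality of $K$ forces $Y \cap K = \emptyset$, and by Lemma \ref{partition} some element of $Y$ lies in a conjugate $C^x$; conjugating back into $C$, I may assume $Y = c^G$ for some $c \in C \setminus \{1\}$. The Frobenius property gives $C_K(c) = \{1\}$, while $C$ abelian yields $C \leq C_G(c)$; since $C_G(c) \cap K = \{1\}$, the centralizer embeds into $G/K$, which has order $q-1$, so $C_G(c) = C$ and $|Y| = |G|/|C| = q$. For the intersection claim, Lemma \ref{partition} together with the Frobenius fact that distinct conjugates of $C$ meet trivially gives the disjoint decomposition $Y = \bigsqcup_{x \in K} (Y \cap C^x)$. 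Each summand is nonempty since $c^x \in Y \cap C^x$, and the $q = |K|$ nonempty summands sum to $|Y| = q$, so $|Y \cap C^x| = 1$ for every $x \in K$. Finally, $\{C^g : g \in G\} = \{C^x : x \in K\}$ because both sets have cardinality $|G|/|N_G(C)| = q$ (using $N_G(C) = C$, again a Frobenius consequence), so $|Y \cap C^g| = 1$ for every $g \in G$.

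The most delicate ingredient is the pairwise triviality of $C^x \cap C^y$ for distinct $x, y \in K$, together with the self-normalizing equality $N_G(C) = C$; both are standard consequences of the defining Frobenius condition, and I would state them as a brief preliminary before deploying the counting argument.
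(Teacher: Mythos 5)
Your proof is correct, and it reaches the same two conclusions via the same case split ($Y\subseteq K$ versus $Y\cap K=\emptyset$), but the internal arguments differ from the paper's in each case. For $Y\subseteq K$ you identify the conjugation action of $C$ on $K$ explicitly as the multiplicative action of $\mathbb{F}_q^{\times}$ on $(\mathbb{F}_q,+)$ and invoke its transitivity on nonzero elements; the paper instead stays at the abstract Frobenius level, showing the $C$-orbit of $y$ is regular (since $C^y\cap C=1$) and hence has size $q-1=|K\setminus\{1\}|$. Your version is more concrete and shorter, the paper's generalizes to any Frobenius group with $|C|=|K|-1$. For $Y\not\subseteq K$ you get $|Y|=q$ by computing $C_G(c)=C$ (embedding the centralizer into $G/K$) and applying orbit--stabilizer, whereas the paper shows directly that the $K$-orbit of $y$ is regular and equals $Y$ because $G=CK$ with $C$ abelian; and for $|Y\cap C^g|=1$ you run a pigeonhole count over the disjoint decomposition $Y=\bigsqcup_{x\in K}(Y\cap C^x)$, while the paper argues uniqueness directly ($y^h\in C^g$ forces $gh^{-1}\in C$, hence $y^g=y^h$). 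Both routes rest on the same Frobenius facts (trivial intersection of distinct conjugates of $C$, and $N_G(C)=C$, which you correctly flag as the ingredients needing a preliminary statement); your counting argument is slightly more global, the paper's uniqueness argument avoids having to verify that the $q$ conjugates $C^x$, $x\in K$, are pairwise distinct and exhaust all conjugates of $C$.
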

\begin{proof}
Take $y\in Y$. Then $Y=y^G$. It suffices to show that $Y=K\setminus\{1\}$ if $y\in K\setminus\{1\}$, and $|Y|=q$ and $|Y\cap C^{g}|=1$ for all $g\in G$ if $y\notin K$.

Firstly, we assume $y\in K\setminus\{1\}$. Since $K$ is normal in $G$, we conclude that $Y\subseteq K\setminus\{1\}$. Since $G$ is a Frobenius group with the Frobenius kernel $K$ and a Frobenius complement $C$, we have $C^y\cap C=1$. In particular $cy\neq yc$ for all $c\in C\setminus\{1\}$. Therefore $y^{c_{1}}\neq y^{c_{2}}$ for each pair of distinct elements $c_1,c_2\in C$. This leads to $|Y|\geq|C|=q-1$. Since $Y\subseteq K\setminus\{1\}$ and $|K\setminus\{1\}|=q-1$, it follows that
$Y=K\setminus\{1\}$.

Now we assume that $y\notin K$. By Lemma \ref{partition}, $y$ is an nonidentity element contained in a Frobenius complement of $G$. Without loss of generality, we assume that $y\in C\setminus\{1\}$.
Then $y^{x_1}\neq y^{x_1}$ for each pair of distinct elements $x_1,x_2\in K$ as $C^{x_1x^{-1}}\cap C=\{1\}$. Therefore $|y^{K}|=|K|=q$. Since $G=CK$ and $C$ is abelian, we have $Y=y^{CK}=y^{K}$ and it follows that $|Y|=q$. For every $g\in G$, we have $y^g\in C^g$. If there exists $h\in G$ satisfying $y^h\in C^g$, then $y\in C^{gh^{-1}}\cap C$ and it follows that $gh^{-1}\in C$. Therefore $y^{gh^{-1}}=y$. This implies that $y^{g}=y^h$. Therefore $y^g$ is the unique element of $Y\cap C^g$, that is, $|Y\cap C^{g}|=1$.
\end{proof}
The following two theorems give a classification of Cayley sum graphs of one-dimensional affine groups which admit a subgroup (total) perfect code.
\begin{theorem}
\label{Frobenius}
Let $C=\langle c\rangle$ and $q-1=st$ with $t>1$. Let $\{a_0,a_1,\ldots,a_{s-1}\}$ be a left transversal of $\langle c^s\rangle$ in $C$ with $a_0\in\langle c^s\rangle\setminus\{1\}$. Set $X:=(\cup_{i=1}^{s-1}a_{i}^{G})\cup(K\setminus\{1\})$ and $Y:=\cup_{i=0}^{s-1}a_{i}^{G}$.
Then $\langle c^s\rangle$ is a perfect code of $\CS(G,X)$.
Moreover, if $c_0$ is a nonsquare of $\langle c^s\rangle$,
then $\langle c^s\rangle$ is a total perfect code of $\CS(G,Y)$.
\end{theorem}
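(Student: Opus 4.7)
My approach is to verify the hypotheses of Lemma \ref{iff} for the first assertion and of Lemma \ref{tiff} for the second: namely, I will show that $X\cup\{1\}$ is a left transversal of $\langle c^s\rangle$ in $G$, and that $Y$ is a left transversal of $\langle c^s\rangle$ in $G$ whose unique element in $\langle c^s\rangle$ is $a_0$. Both $X$ and $Y$ are manifestly normal subsets of $G$ since each is a union of $G$-conjugacy classes, with Lemma \ref{length} ensuring that $K\setminus\{1\}$ is itself a single conjugacy class. A count using Lemma \ref{length}, which gives $|a_i^G|=q$ for every $i$ because $a_i\in C\setminus\{1\}$ lies outside $K$, yields $|X|+1=(s-1)q+(q-1)+1=sq$ and $|Y|=sq$, matching $|G:\langle c^s\rangle|=q(q-1)/t=qs$.

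The structural heart of the argument is the identification $a_i^G=Ka_i$ for each $i\in\{0,1,\ldots,s-1\}$. Since $K$ is normal in $G$, every conjugate of $a_i$ has the same image as $a_i$ in $G/K\cong C$, so $a_i^G\subseteq Ka_i$; equality follows by comparing cardinalities. Because $G=KC$ with $K\cap C=\{1\}$, every left coset of $\langle c^s\rangle$ in $G$ has the unique form $k\cdot c'\langle c^s\rangle$ with $k\in K$ and $c'\langle c^s\rangle\in C/\langle c^s\rangle$. Since $a_0,a_1,\ldots,a_{s-1}$ are a transversal of $\langle c^s\rangle$ in $C$, the class $a_i^G=Ka_i$ hits the coset $k\cdot a_i\langle c^s\rangle$ in exactly one element for each $k\in K$, and classes $a_i^G$ with different $i$ occupy disjoint blocks of cosets.

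With this in hand, the transversal verifications become bookkeeping. For $X\cup\{1\}$: the $a_i^G$ with $i\geq 1$ together cover each coset in the $s-1$ blocks corresponding to $a_i\langle c^s\rangle\neq\langle c^s\rangle$ exactly once, and the set $\{1\}\cup(K\setminus\{1\})=K$ covers the remaining block $\{k\langle c^s\rangle:k\in K\}$ exactly once. Hence $X\cup\{1\}$ is a left transversal of $\langle c^s\rangle$ in $G$, and Lemma \ref{iff} delivers the perfect code conclusion. For $Y$: the class $a_0^G=Ka_0$ replaces $K$ in covering the block associated with $a_0\langle c^s\rangle=\langle c^s\rangle$, so $Y$ is a left transversal; a direct check using $K\cap C=\{1\}$ then gives $Y\cap\langle c^s\rangle=Ka_0\cap\langle c^s\rangle=\{a_0\}$. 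Combined with the hypothesis that $a_0$ is a nonsquare of $\langle c^s\rangle$, Lemma \ref{tiff} yields the total perfect code conclusion. The main obstacle is the identification $a_i^G=Ka_i$ together with the parameterization of left cosets via the semidirect product; once those are in place, everything else is routine counting.
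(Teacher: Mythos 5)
Your proof is correct, and it takes a genuinely different route from the paper's at the key step. Both arguments reduce to verifying the transversal hypotheses of Lemmas \ref{iff} and \ref{tiff} and both lean on Lemma \ref{length} for $|a_i^G|=q$, but where the paper shows that $Y$ is a left transversal by an explicit element computation --- writing $(a_i^{g})^{-1}a_{j}^{h}=[g,a_i][ha_i,a_j^{-1}]a_ia_j^{-1}$, invoking the Frobenius partition (Lemma \ref{partition}) to force the commutator part into $K$ and hence to be trivial --- you instead establish the structural identity $a_i^G=Ka_i$ (via the abelian quotient $G/K\cong C$ plus a cardinality comparison) and parameterize the left cosets of $\langle c^s\rangle$ by $K\times C/\langle c^s\rangle$ using the semidirect product decomposition $G=K\rtimes C$. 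Your route buys a cleaner, more conceptual bookkeeping step and, notably, a uniform treatment of $X$ and $Y$: the paper handles $X\cup\{1\}$ by a separate argument showing $a_0^G\langle c^s\rangle=K\langle c^s\rangle$ and comparing with $Y$, whereas in your setup the block $\{k\langle c^s\rangle:k\in K\}$ is simply covered by $K$ in one case and by $Ka_0$ in the other. The paper's computation, by contrast, works directly from the two stated lemmas without introducing the coset parameterization. One cosmetic point: the statement's ``$c_0$'' is a typo for $a_0$, and you have correctly read it that way.
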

\begin{proof}
Since $\{a_0,a_1,\ldots,a_{s-1}\}$ is a left transversal of
$\langle c^s\rangle$ in $C$ and $a_0\in\langle c^s\rangle\setminus\{1\}$, we have that $a_i\neq1$ for every $i\in\{0,1,\ldots,s-1\}$. By Lemma \ref{length}, we have $|a_i^{G}|=q$ and $a_i^{G}\cap C=\{a_i\}$. Therefore $a_j^{G}\cap a_k^{G}=\emptyset$ for each pair of distinct elements $a_j$ and $a_k$. It follows that $|Y|=|\cup_{i=0}^{s-1}a_{i}^{G}|=\sum_{i=0}^{s-1}|a_{i}^{G}|=sq$. Since $q-1=st$ and $|\langle c\rangle|=q-1$, we have
$|Y||\langle c^s\rangle|=sqt=q(q-1)=|G|$. Take $a_i^{g},a_j^{h}\in Y$ such that $a_i^{g}\langle c^s\rangle=a_j^{h}\langle c^s\rangle$. Since $(a_i^{g})^{-1}a_{j}^{h}=[g,a_i][ha_i,a_j^{-1}]a_ia_j^{-1}$, we have  $[g,a_i][ha_i,a_j^{-1}]a_ia_j^{-1}\in \langle c^s\rangle$. By Lemma \ref{partition}, we have $[g,a_i][ha_i,a_j^{-1}]\in K$. It follows that $[g,a_i][ha_i,a_j^{-1}]=1$ and $a_ia_j^{-1}\in\langle c^s\rangle$. Since $\{a_0,a_1,\ldots,a_{s-1}\}$ is a left transversal of
$\langle c^s\rangle$ in $C$, we have $a_i=a_j$ and therefore $[g,a_i][ha_i,a_i^{-1}]=1$. Since $[g,a_i][ha_i,a_i^{-1}]=
g^{-1}a_i^{-1}ga_ia_i^{-1}h^{-1}a_iha_ia_i^{-1}=(a_i^g)^{-1}a_i^h$, we get
$a_i^g=a_i^h$, that is, $a_i^g=a_j^h$. Now we have proved that $|Y||\langle c^s\rangle|=|G|$ and $a_i^{g}\langle c^s\rangle=a_j^{h}\langle c^s\rangle$ implies $a_i^g=a_j^h$ for $a_i^{g},a_j^{h}\in Y$. Therefore $Y$ is a left transversal of $\langle c^s\rangle$ in $G$. By Lemma \ref{tiff}, $\langle c^s\rangle$ is a total perfect code of $\CS(G,Y)$ if $a_0$ is a non-square of $\langle c^s\rangle$.

It remains to prove that $\langle c^s\rangle$ is a perfect code of $\CS(G,X)$. Since $K$ is normal in $G$, we have that $K\langle c^s\rangle$ is a subgroup of $G$. By Lemma \ref{partition}, we have $[g,a_0^{-1}]\in K$ for all $g\in G$. Therefore $a_0^g=[g,a_0^{-1}]a_0\in K\langle c^s\rangle$ as $a_0\in\langle c^s\rangle$. It follows that
$a_0^G\langle c^s\rangle\subseteq K\langle c^s\rangle$. Since
 $|a_0^G\langle c^s\rangle|=|a_0^G||\langle c^s\rangle|=qt=|K\langle c^s\rangle|$, we conclude that $a_0^G\langle c^s\rangle=K\langle c^s\rangle$. Note that $X\cup\{1\}=(Y\setminus a_0^G)\cup K$. Therefore
 $(X\cup\{1\})\langle c^s\rangle=Y\langle c^s\rangle=G$. Since $|X\cup\{1\}|=|Y|$ and $Y$ is a left transversal of $\langle c^s\rangle$ in $G$, it follows that $X\cup\{1\}$ is a left transversal of $\langle c^s\rangle$ in $G$. By Lemma \ref{iff}, $\langle c^s\rangle$ is a perfect code of $\CS(G,X)$.
\end{proof}
\begin{theorem}
Every Cayley sum graph of $\mathrm{AGL}_1(q)$ has no nontrivial subgroup perfect code except the graphs $\CS(G,X)$ in Theorem \ref{Frobenius} and no subgroup total perfect code except the graphs $\CS(G,Y)$ in Theorem \ref{Frobenius}.
\end{theorem}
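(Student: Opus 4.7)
The plan is to suppose $H$ is a nontrivial subgroup perfect code of $\CS(G,X)$ (respectively, a subgroup total perfect code of $\CS(G,Y)$) with $H\ne G$, and to show that the connection set must coincide with one of those described in Theorem \ref{Frobenius}. Throughout I will combine Lemma \ref{iff} (or Lemma \ref{tiff}) with Lemma \ref{length}: the non-identity conjugacy classes of $G$ are $K\setminus\{1\}$ of size $q-1$ and, for each $a\in C\setminus\{1\}$, the class $a^G$ of size $q$. Writing the connection set as a union of conjugacy classes, its size must be $\varepsilon(q-1)+nq$ for some $\varepsilon\in\{0,1\}$ and $n\ge 0$.

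For the perfect code case, I first eliminate $\varepsilon=0$: the equation $|H|(nq+1)=q(q-1)$ together with $\gcd(nq+1,q)=1$ forces $nq+1\mid q-1$, hence $n=0$ and $H=G$, contradicting $H\ne G$. So $\varepsilon=1$ and $|H|(n+1)=q-1$; setting $s=n+1$ and $t=|H|$ yields $q-1=st$, and since $\gcd(t,q)=1$ we have $H\cap K=\{1\}$, so the Frobenius structure places $H$ inside some conjugate of $C$. By Lemma \ref{auto} I may take $H=\langle c^s\rangle$. The $q$ elements of $K$ serve as representatives for the $q$ cosets of $H$ meeting $K$, so $X\cap(G\setminus K)$ must provide one representative for each of the remaining $(s-1)q$ cosets. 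Since by Lemma \ref{length} each class $a^G$ with $a\in C\setminus\{1\}$ meets $C$ only in $a$, and its $q$ elements lie in the $q$ $H$-cosets with $C/H$-image $a\langle c^s\rangle$, there must be exactly one conjugacy class $a_i^G\subseteq X$ for each non-trivial coset of $H$ in $C$. Hence $\{a_0,a_1,\ldots,a_{s-1}\}$ (for any choice of $a_0\in H\setminus\{1\}$) is a left transversal of $H$ in $C$, and $X$ has the form given in Theorem \ref{Frobenius}.

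For the total perfect code case, the new work is to rule out $\varepsilon=1$. If $H\cap K=\{1\}$ and $\varepsilon=1$, then rearranging $|H|\cdot|Y|=|G|$ gives $(q-1)(q-t)=ntq$ with $t=|H|$; but $\gcd(q,(q-1)(q-t))=1$ (using $\gcd(t,q)=1$), so this equation has no non-negative integer solution. If instead $H\cap K\ne\{1\}$, then $(K\setminus\{1\})\cap H\subseteq Y\cap H=\{z\}$ forces $|H\cap K|=2$ and $p=2$; a short argument shows the multiplicative stabilizer in $\mathbb{F}_q^*$ of an order-$2$ subgroup $L$ of $K$ is trivial, so $N_G(L)=K$ and $H=L$, after which the resulting value $n=(q-1)(q-2)/(2q)$ is non-integer for $q\ge 4$. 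Hence $\varepsilon=0$, so $|H|n=q-1$, giving $H\le C$ after conjugation; the same transversal analysis then shows $Y=\cup_{i=0}^{s-1}a_i^G$ with $\{a_0,\ldots,a_{s-1}\}$ a left transversal of $H$ in $C$, and Lemma \ref{tiff} forces the distinguished element $a_0\in H$ to be a nonsquare of $H$, matching Theorem \ref{Frobenius}. The main obstacle is this total-perfect-code case analysis: ruling out $\varepsilon=1$ requires the delicate divisibility and normalizer arguments just sketched, which have no counterpart in the perfect code case.
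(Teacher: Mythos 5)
Your proof is correct and follows essentially the same route as the paper: decompose the connection set into conjugacy classes via Lemma \ref{length}, use the transversal conditions of Lemmas \ref{iff} and \ref{tiff} to derive divisibility constraints that force $K\setminus\{1\}\subseteq X$ (resp.\ $Y\cap K=\emptyset$), place $H$ inside a Frobenius complement, and match the remaining classes with the cosets of $H$ in $C$. The only divergence is your sub-case analysis on $H\cap K$ with the normalizer argument in the total-perfect-code part; the paper dispenses with this uniformly by observing that $K\setminus\{1\}\subseteq Y$ would give $|Y|\equiv -1 \pmod{q}$, which is coprime to $q$ and (outside degenerate cases) exceeds $q-1$, hence cannot divide $|G|=q(q-1)$, so no normalizer computation is needed.
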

\begin{proof}
Suppose that $\CS(G,X)$ is a Cayley sum graph of $G$ admitting a subgroup perfect code $H$. Since $X$ is a normal subset of $G$, it is a disjoint union of conjugacy classes of $G$. By Lemma \ref{length}, either $(K\setminus\{1\})\subseteq X$ or $|X|=\ell q$ for some integer $\ell$. By Lemma \ref{iff}, $\{1\}\cup X$ is a left transversal of $H$ in $G$. In particular, $(|X|+1)$ is a divisor of $q(q-1)$. Since $(\ell q+1)\nmid q(q-1)$, we conclude that $(K\setminus\{1\})\subseteq X$. It follows that $H\cap K=1$ and therefore $H$ is contained in a Frobenius complement of $G$. Without loss of generality, we assume $H\le C$. By Lemma \ref{length}, every  conjugacy class contained in $X\setminus K$ has a unique element belonging to $C$. Therefore we can set $X=(\cup_{i=1}^{s-1}a_i^{G})\cup(K\setminus\{1\})$ where $a_i$ is the unique common element of $C$ and a conjugacy class for all $i\in \{1,\ldots,s-1\}$. In particular, we have $X\cap C=\{a_1,\ldots,a_{s-1}\}$. Since $\{1\}\cup X$ is a left transversal of $H$ in $G$, we have that $\{a_0,a_1,\ldots,a_{s-1}\}$ is a left transversal of $H$ in $C$ for each $a_0\in H$. Therefore the graph $\CS(G,X)$ here is inconsistent with the graphs $\CS(G,X)$ in Theorem \ref{Frobenius}.

Suppose that $\CS(G,Y)$ is a Cayley sum graph of $G$ admitting a subgroup total perfect code $H$. By Lemma \ref{length}, either $(K\setminus\{1\})\subseteq Y$ or $|Y|$ is divisible by $q$. By Lemma \ref{tiff}, $Y$ is a left transversal of $H$ in $G$. In particular, $|Y|$ is a divisor of $q(q-1)$. Note that $|Y|=\ell q-1$ for some nonnegative integer $\ell$ if $(K\setminus\{1\})\subseteq Y$.  Since $(\ell q-1)\nmid q(q-1)$, we conclude that $K\setminus\{1\}$ is not contained in $Y$. Therefore $|Y|$ is divisible by $q$. Since $|Y||H|=|G|=q(q-1)$, we have that $|H|$ is a divisor of $q-1$. Thus $H$ is contained in a Frobenius complement of $G$. Without loss of generality, we assume $H\le C$. Let $a_0$ be the unique common element of $Y$ and $H$. Then $a_0$ is a nonsquare of $H$.  Set $Y\cap C=\{a_0, a_1,\ldots,a_{s-1}\}$. Then $Y=\cup_{i=0}^{s-1}a_i^{G}$. Since $Y$ is a left transversal of $H$ in $G$, we have that $\{a_0,a_1,\ldots,a_{s-1}\}$ is a left transversal of $H$ in $C$.
Therefore the graph $\CS(G,Y)$ here is inconsistent with the graphs $\CS(G,Y)$ in Theorem \ref{Frobenius}.
\end{proof}

\noindent {\textbf{Acknowledgements}}~~This work was supported by the Natural Science Foundation of Chongqing (CSTB2022NSCQ-MSX1054) and the Foundation of Chongqing Normal University (21XLB006).

\medskip
\noindent {\textbf{Data Availability} No data, models, or code were generated or used during the study.
{\small
\end{document}